\newtheorem*{maintheorem*}{Main Theorem}
\newtheorem{theorem}{Theorem}[section]
\newtheorem{prop}[theorem]{Proposition}
\newtheorem{conj}[theorem]{Conjecture}
\newtheorem{question}[theorem]{Question}
\newtheorem{lemma}[theorem]{Lemma}
\newtheorem{cor}[theorem]{Corollary}
\theoremstyle{definition}
\newtheorem{definition}[theorem]{Definition}
\newtheorem{remark}[theorem]{Remark}
\numberwithin{equation}{section}
\newcommand{\nn}{\mathbb{N}}
\newcommand{\qq}{\mathbb{Q}}
\newcommand{\rr}{\mathbb{R}}
\newcommand{\zz}{\mathbb{Z}}
\providecommand\ldb{\llbracket}
\providecommand\rdb{\rrbracket}
\newcommand{\pp}{\mathsf{p}}
\newcommand{\pf}{\mathbb{P}_{\text{fin}}}
\newcommand{\cone}{\mathsf{cone}}
\newcommand{\rank}{\mathsf{rank}}
\newcommand{\slp}{\mathsf{slope}}
\newcommand{\norm}[1]{\left\lVert#1\right\rVert}
\keywords{free commutative monoid, atomic monoid, set of lengths, elasticity, factorization theory, convex geometry}
\subjclass[2010]{Primary: 20M13; Secondary: 20M14, 20M05}
\begin{document}
	
	\mbox{}
	\title{On the system of sets of lengths and \\ the elasticity of submonoids of a\\ finite-rank free commutative monoid}
	\author{Felix Gotti}
	\address{Department of Mathematics\\UC Berkeley\\Berkeley, CA 94720 \newline \indent
				  Department of Mathematics\\Harvard University\\Cambridge, MA 02138}
	\email{felixgotti@berkeley.edu}
	\email{felixgotti@harvard.edu}
	\date{\today}
	
	\begin{abstract}
		Let $H$ be an atomic monoid. For $x \in H$, let $\mathsf{L}(x)$ denote the set of all possible lengths of factorizations of $x$ into irreducibles. The system of sets of lengths of $H$ is the set $\mathcal{L}(H) = \{\mathsf{L}(x) \mid x \in H\}$. On the other hand, the elasticity of $x$, denoted by $\rho(x)$, is the quotient $\sup \mathsf{L}(x)/\inf \mathsf{L}(x)$ and the elasticity of $H$ is the supremum of the set $\{\rho(x) \mid x \in H\}$. The system of sets of lengths and the elasticity of $H$ both measure how far is $H$ from being half-factorial, i.e., $|\mathsf{L}(x)| = 1$ for each $x \in H$.
		
		Let $\mathcal{C}$ denote the collection comprising all submonoids of finite-rank free commutative monoids, and let $\mathcal{C}_d = \{H \in \mathcal{C} \mid \text{rank}(H) = d\}$. In this paper, we study the system of sets of lengths and the elasticity of monoids in $\mathcal{C}$. First, we construct for each $d \ge 2$ a monoid in $\mathcal{C}_d$ having extremal system of sets of lengths. It has been proved before that the system of sets of lengths does not characterize (up to isomorphism) monoids in $\mathcal{C}_1$. Here we use our construction to extend this result to $\mathcal{C}_d$ for any $d \ge 2$. On the other hand, it has been recently conjectured that the elasticity of any monoid in $\mathcal{C}$ is either rational or infinite. We conclude this paper by proving that this is indeed the case for monoids in $\mathcal{C}_2$ and for any monoid in $\mathcal{C}$ whose corresponding convex cone is polyhedral.
	\end{abstract}
\bigskip

\maketitle

\section{Introduction}
\label{sec:intro}

Many interesting integral domains fail to be unique factorization domains. The~interest in measuring such a failure dates back to the mid-nineteen century. For an integral domain $R$ and $x \in R$, let $\mathsf{Z}(x)$ be the set of all possible factorizations of $x$ into irreducibles. The domain $R$ is called \emph{half-factorial} if for all $x \in R$ any two $z,z' \in \mathsf{Z}(x)$ involve the same number of irreducibles (counting repetitions). L.~Carlitz~\cite{lC60} proved that a ring of integers is half-factorial if and only if the size of its class group is at most~$2$. The phenomenon of non-unique factorizations of many other families of integral domains has been studied since then (see~\cite{dA97,ASS94} and references therein). The study of the non-uniqueness of factorizations on commutative cancellative monoids has also earned significant attention during the last few decades (see \cite{CGP14,CGTV16,GZ18}). This is mainly because many factorization properties of an integral domain $R$ are purely multiplicative in nature and, therefore, can be understood by studying only its multiplicative monoid $R \! \setminus \! \{0\}$.

To measure how far an integral domain or a commutative cancellative monoid is from being half-factorial, many algebraic and arithmetical invariants have proved to be useful. Such invariants include the class group (of a Krull domain/monoid)~\cite{lC60}, the system of sets of lengths~\cite{aG16}, the elasticity~\cite{dA97}, and the set of distances~\cite{CGP14}.
\medskip

\noindent {\bf Notation.} For $d \ge 1$, we let $\mathcal{C}_d$ denote the collection consisting of all rank-$d$ submonoids of any finite-rank free commutative monoid. In addition, we set $\mathcal{C} := \cup_{d \ge 1} \, \mathcal{C}_d$ and $\mathcal{C}_{\ge 2} := \cup_{d \ge 2} \, \mathcal{C}_d$.
\medskip

The class $\mathcal{C}$ generalizes the class of all reduced affine monoids, i.e., monoids in $\mathcal{C}$ that are finitely generated. The interested reader may find a self-contained treatment of affine monoids in~\cite{BG09}. In this paper, we investigate the phenomenon of non-unique factorizations of monoids in $\mathcal{C}$. To understand how far from half-factorial the monoids in the class $\mathcal{C}$ can be, we will investigate their systems of sets of lengths and their elasticities.

The system of sets of lengths $\mathcal{L}(H)$ of an atomic monoid $H$ encodes significant information about the arithmetic of factorizations of $H$. This explains why the system of sets of lengths is perhaps the most investigated factorization invariant in the context of atomic monoids. In particular, the search for classes of atomic monoids having extremal systems of sets of lengths has been frequently explored in the recent literature (see~\cite{fK99} and~\cite{FNR17}). In the first part of this paper we exhibit, for every $d \ge 2$, a monoid $H_d \in \mathcal{C}_d$ having full system of sets of lengths.

In the 1970s, W. Narkiewicz posed the question of whether the arithmetic describing the non-uniqueness of factorizations in a Krull domain could be used to characterize its class group (for affirmative answers to this, see~\cite[Sections~7.1 and~7.2]{GH06b}). In general, the question of whether $\mathcal{L}(H)$ completely determines a monoid $H$ (up to isomorphism) inside a distinguished family of atomic monoids has been previously studied (see~\cite{ACHP07}, \cite[Section~4]{fG17c}, and~\cite[Section~6]{aG16}). In the context of Krull monoids, this question is known as the Characterization Problem, which is still open and being actively investigated. Similar questions have been answered for numerical monoids~\cite{ACHP07} and Puiseux monoids~\cite{fG17c}. Here, we argue that, for any $d \ge 2$, the system of sets of lengths does not characterize (up to isomorphism) the monoids in the class $\mathcal{C}_d$.

The elasticity was first used in~\cite{rV90} as a tool to measure the phenomenon of non-unique factorizations in rings of integers of algebraic number fields. The elasticity $\rho(H) \in \rr_{\ge 1} \cup \{\infty\}$ of an atomic monoid $H$ also measures how far is $H$ from being half-factorial; in particular, $H$ is half-factorial if and only if $\rho(H) = 1$. Although the elasticity encodes substantially less amount of information than the system of sets of lengths does, the former is, in general, much easier to compute. The elasticity of integral domains and atomic monoids has been considered by many authors (see, for instance, \cite{dA97}, \cite{BOP17}, \cite{CGGR01}, and \cite{GO17}). In the last section of this paper, we turn to study the elasticity of monoids in $\mathcal{C}$. In particular, we prove that the elasticity of any monoid in~$\mathcal{C}_2$ is either rational or infinity. Finally, we show that if the convex cone of $H \in \mathcal{C}$ is a polyhedral cone, then $\rho(H)$ is also rational or infinite.
\bigskip

\section{Background on Monoids and Factorization Theory}
\label{sec:background}

In this section we introduce most of the relevant concepts concerning commutative monoids and factorization theory required to follow our exposition. For deeper background or undefined terms on these subjects, the reader may want to consult A.~Geroldinger and F.~Halter-Koch~\cite{GH06b} and P.~Grillet~\cite{pG01}.
\medskip

\noindent {\bf General Notation.} Recall that $\nn := \{0,1,2,\dots\}$. If $a,b \in \zz$ and $a \le b$, then we let the interval $\ldb a,b \rdb$ denote the set of integers $\{z \in \zz \mid a \le z \le b\}$. In addition, for $X \subseteq \rr$ and $r \in \rr$, we set
\[
	X_{\ge r} := \{x \in X \mid x \ge r\}; 
\]
in a similar manner, we use the symbol $X_{> r}$. Also, if $Y \subseteq \rr^d$ for some $d \in \nn \setminus \{0\}$, then we set $Y^\bullet := Y \setminus \{0\}$.
Finally, we introduce the nonstandard notation
\[
	\pf := \big\{ \{0\},\{1\} \big\} \cup \big\{S \subset \zz_{\ge 2} \mid S \ \text{ is finite} \big\}
\]
because the set $\pf$ will play an important role in Section~\ref{sec:sets of lengths}.
\medskip

A \emph{monoid} is commonly defined in the literature as a semigroup along with an identity element. However, in the following all monoids are commutative and cancellative, and we omit to mention these two attributes accordingly. As we only consider commutative monoids, unless otherwise specified we will use additive notation. In particular, the identity element of a monoid $H$ is denoted by $0$, and we let $H^\bullet$ denote the set $H \! \setminus \! \{0\}$. For $x,y \in H$, we say that $y$ \emph{divides} $x$ \emph{in} $H$ and write $y \mid_H x$ provided that $x = y + z$ for some $z \in H$. We write $H = \langle S \rangle$ when $H$ is generated as a monoid by a set $S$. If $H$ can be generated as a monoid by a finite set, we say that $H$ is \emph{finitely generated}.

Every monoid $H$ we consider here is assumed to be \emph{reduced}, which means that the only invertible element of $H$ is $0$. An element $a \in H^\bullet$ is called an \emph{atom} if for each pair of elements $y,z \in H$ such that $a = y+z$ either $y = 0$ or $z = 0$. The set consisting of all atoms of $H$ is denoted by $\mathcal{A}(H)$, that is,
\[
	\mathcal{A}(H) := H^\bullet \setminus \big( H^\bullet + H^\bullet \big).
\]
Since $H$ is reduced, it follows that $\mathcal{A}(H)$ will be contained in each generating set of~$H$. If $\mathcal{A}(H)$ generates $H$, then $H$ is said to be \emph{atomic}. All monoids addressed in this paper are atomic.

For any monoid $H$ there exist an abelian group $\text{gp}(H)$ and a monoid homomorphism $\iota \colon H \hookrightarrow \text{gp}(H)$ such that any monoid homomorphism $\phi \colon H \to G$ (where $G$ is a group) uniquely factors through $\iota$. The group $\text{gp}(H)$, which is unique up to isomorphism, is called the \emph{difference group} (or \emph{Grothendieck group}) of $H$. If $H$ is a monoid in $\mathcal{C}$, then the \emph{rank} of $H$, denoted by $\rank(H)$, is the rank of the abelian group $\text{gp}(H)$, that is, the dimension of the $\qq$-space $\qq \otimes_\zz \text{gp}(H)$. The monoid $H$ is \emph{torsion-free} if $nx = ny$ for some $n \in \nn$ and $x,y \in H$ implies that $x = y$. A monoid is torsion-free if and only if its difference group is torsion-free (see \cite[Section~2.A]{BG09}).

A multiplicative commutative monoid $F$ is \emph{free on} a subset $A$ of $F$ if every element $x \in F$ can be written uniquely in the form
\[
	x = \prod_{a \in A} a^{\mathsf{v}_a(x)},
\]
where $\mathsf{v}_a(x) \in \nn$ and $\mathsf{v}_a(x) > 0$ only for finitely many $a \in A$. It is well known that for each set $A$, there exists a unique free commutative monoid on $A$ (up to isomorphism). The free commutative monoid on $\mathcal{A}(H)$, denoted by $\mathsf{Z}(H)$, is called the \emph{factorization monoid} of $H$, and the elements of $\mathsf{Z}(H)$ are called \emph{factorizations}. If $z = a_1 \dots a_n$ is a factorization in $\mathsf{Z}(H)$ for some $n \in \nn$ and $a_1, \dots, a_n \in \mathcal{A}(H)$, then $n$ is called the \emph{length} of $z$ and is denoted by $|z|$. The unique monoid homomorphism $\phi \colon \mathsf{Z}(H) \to H$ satisfying $\phi(a) = a$ for all $a \in \mathcal{A}(H)$ is called the \emph{factorization homomorphism} of $H$, and for each $x \in H$ the set
\[
	\mathsf{Z}(x) := \mathsf{Z}_H(x) := \phi^{-1}(x) \subseteq \mathsf{Z}(H)
\]
is called the \emph{set of factorizations} of $x$. Observe that $H$ is atomic if and only if $\mathsf{Z}(x)$ is nonempty for all $x \in H$ (notice that $\mathsf{Z}(0) = \{\emptyset\}$). The monoid $H$ is called a \emph{finite factorization monoid} or, simply, an \emph{FF-monoid} provided that $|\mathsf{Z}(x)| < \infty$ for all $x \in H$. For each $x \in H$, the \emph{set of lengths} of $x$ is defined by
\[
	\mathsf{L}(x) := \mathsf{L}_H(x) := \{|z| \ | \ z \in \mathsf{Z}(x)\}.
\]
If $|\mathsf{L}(x)| < \infty$ for all $x \in H$, then $H$ is called a \emph{bounded factorization monoid} or, for short, a \emph{BF-monoid}. Clearly, if a monoid is an FF-monoid, then it is also a BF-monoid. The \emph{system of sets of lengths} of $H$ is defined by
\[
	\mathcal{L}(H) := \{\mathsf{L}(x) \mid x \in H\}.
\]
The structure of the system of sets of lengths of Krull monoids was first studied by Geroldinger in~\cite{aG88}. Since then the structure of the system of sets of lengths of many other classes of monoids and domains has been investigated; see reference in~\cite{aG16}, which is a survey on sets of lengths and the role they play in factorization theory. We say that a BF-monoid $H$ has \emph{full system of sets of lengths} if $\mathcal{L}(H) = \pf$. Note that $\pf$ is the largest (under inclusion) system of sets of lengths a BF-monoid can have.

An important factorization statistic related with the sets of lengths of an atomic monoid $H$ is the elasticity. The \emph{elasticity} $\rho(x)$ of an element $x \in H^\bullet$ is defined as
\[
	\rho(x) := \rho_H(x) := \frac{\sup \mathsf{L}(x)}{\inf \mathsf{L}(x)}.
\]
Note that $\rho(x) \in \qq_{\ge 1} \cup \{\infty\}$ for all $x \in H^\bullet$. On the other hand, the \emph{elasticity} of $H$ is defined to be
\[
	\rho(H) := \sup \{\rho(x) \mid x \in H\}.
\]
The \emph{set of elasticities} of $H$ is $\mathcal{R}(H) := \{\rho(x) \mid x \in H^\bullet\}$. We say that $H$ is \emph{fully elastic} provided that $\mathcal{R}(H) = \{q \in \qq \mid 1 \le q \le \rho(H) \}$. The concept of elasticity was introduced back in 1980 by R.~Valenza~\cite{rV90} in the context of algebraic number theory\footnote{Valenza's paper appeared in 1990; however, it was actually submitted 10 years earlier.}. The system of sets of lengths and the elasticity have received a great deal of attention in the literature in recent years (see, for instance,~\cite{ACHP07,CGGR01,FG08,GO17}). 

A very special family of atomic monoids is that of all \emph{numerical monoids}, i.e., cofinite submonoids of $\nn$. Each numerical monoid $H$ has a unique minimal generating set, which is finite; such a unique minimal generating set is precisely $\mathcal{A}(H)$. As a result, every numerical monoid is atomic and contains only finitely many atoms. The reader can find an introduction to numerical monoids in~\cite{GR09}. We end this section with the following realization theorem of A. Geroldinger and W. Schmid, which will be crucial in the proof of Theorem~\ref{thm:GAM with full system of sets of lengths}.

\begin{theorem} \cite[Theorem~3.3]{GS17} \label{thm:realization theorem in numerical monoids}
	Let $L \subset \zz_{\ge 2}$ be a finite nonempty set, and let $f \colon L \to \zz_{\ge 1}$ be a map. Then there exist a numerical monoid $H$ and a squarefree element $x \in H$ such that
	\[
		\mathsf{L}(x) = L \ \text{and} \ |\mathsf{Z}_k(x)| = f(k) \ \text{for every} \ k \in L,
	\]
	where $\mathsf{Z}_k(x) := \{z \in \mathsf{Z}(x) \mid |z| = k\}$.
\end{theorem}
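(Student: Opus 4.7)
The plan is to build a numerical monoid $H$ by designing its atoms so that one chosen element $x$ has exactly the prescribed factorizations. First, enumerate $L = \{\ell_1 < \cdots < \ell_n\}$, write $m_i := f(\ell_i)$, and introduce $N := \sum_i \ell_i m_i$ formal atoms $a_{i,j,r}$ with $1 \le i \le n$, $1 \le j \le m_i$, $1 \le r \le \ell_i$, grouped into bundles $B_{i,j} := \{a_{i,j,1}, \ldots, a_{i,j,\ell_i}\}$. The aim is to assign positive integer values to the $a_{i,j,r}$'s so that (i) each bundle sums to a common target $x$, and (ii) the only $\nn$-combinations of atoms summing to $x$ are the bundle-sums themselves, with each atom used with multiplicity one. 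Granted (i) and (ii), the numerical monoid $H$ generated by the atoms has $\mathsf{Z}_H(x)$ in bijection with the pairs $(i,j)$, so $\mathsf{L}(x) = L$, $|\mathsf{Z}_{\ell_i}(x)| = m_i$, and every factorization of $x$ is squarefree.

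To realize (i) and (ii), I would combine a mod-$p$ trick with a magnitude trick. Fix a large prime $p$ and choose residues $\rho_{i,j} \in \zz/p\zz$ satisfying $\ell_i \rho_{i,j} \equiv x_0 \pmod{p}$ for a common $x_0$, chosen generically so that no combination $\sum_{i,j} u_{i,j}\,\rho_{i,j} \equiv x_0 \pmod{p}$ with $\sum_{i,j} u_{i,j} \in L$ is valid except the trivial ones with $u_{i_0, j_0} = \ell_{i_0}$ and all other $u_{i,j} = 0$. Then pick a very large integer $M$ divisible by $\mathrm{lcm}(\ell_1, \ldots, \ell_n)$, let the target $x$ be $\equiv x_0 \pmod{p}$ and of magnitude on the order of $M\cdot\mathrm{lcm}(\ell_i)$, and define each $a_{i,j,r}$ close to $x/\ell_i$, with residue $\rho_{i,j}$ mod $p$, and with small pairwise-distinct perturbations within each bundle chosen so that the bundle sum is exactly $x$. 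A final tweak (e.g.\ scaling plus the addition of one large co-prime atom) arranges $\gcd$ of all atoms equal to $1$, so $H$ is numerical. Given any factorization $x = \sum c_{i,j,r} a_{i,j,r}$ of length $k := \sum c_{i,j,r}$, set $u_i := \sum_{j,r} c_{i,j,r}$; a magnitude estimate forces $\sum_i u_i/\ell_i = 1$ exactly, hence $k \in L$ and the support sits at a single level $i$; the mod-$p$ genericity then confines the support to a single bundle $B_{i^*, j^*}$; and the pairwise-distinct perturbations inside that bundle force $c_{i^*, j^*, r} = 1$ for every $r$.

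The main obstacle is securing the generic choice of the residues. The set of bad tuples $(\rho_{i,j})$, namely those admitting some spurious $\sum u_{i,j}\rho_{i,j} \equiv x_0 \pmod{p}$ with $\sum u_{i,j} \in L$, is contained in a union of affine hyperplanes of $(\zz/p\zz)^{\sum_i m_i}$ indexed by the finitely many admissible configurations $(u_{i,j})$. For $p$ sufficiently large, this union cannot cover the constraint locus $\{\ell_i \rho_{i,j} \equiv x_0\}$, so a valid tuple exists. Making the counting precise, and then threading it through the magnitude parameter $M$ so that the integer equation really is controlled by the modular one (rather than by an off-by-$p$ anomaly), is the delicate technical step; the rest is careful bookkeeping.
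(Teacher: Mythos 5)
This theorem is not proved in the paper at all: it is imported verbatim from Geroldinger and Schmid as \cite[Theorem~3.3]{GS17}, so there is no internal proof to compare yours against, and I can only assess your proposal on its own terms. Your bundle architecture and the magnitude step are sound: every atom lies within some fixed $\Delta$ of $x/\ell_i$ and is at least $x/\ell_n-\Delta$, so any factorization has length $k\le\ell_n$ once $x$ is large, and then $\bigl|1-\sum_i u_i/\ell_i\bigr|\le k\Delta/x$ together with the fact that $\sum_i u_i/\ell_i$ has denominator dividing $\mathrm{lcm}(\ell_1,\dots,\ell_n)$ forces $\sum_i u_i/\ell_i=1$ exactly. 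But the next two assertions are where the argument breaks. First, $\sum_i u_i/\ell_i=1$ does not give $k\in L$ or single-level support: for $L=\{2,4\}$ the vector $u=(1,2)$ satisfies $1/2+2/4=1$ with $k=3\notin L$. Second, and more seriously, the mod-$p$ device you rely on to kill such configurations has no room to operate. The constraint $\ell_i\rho_{i,j}\equiv x_0\pmod p$ with $p\nmid\ell_i$ forces $\rho_{i,j}\equiv x_0\ell_i^{-1}$, so the residues are completely determined and, in particular, equal for all bundles at the same level; the ``constraint locus'' is a single point for each $x_0$, not something a union of hyperplanes could plausibly fail to cover. Worse, a configuration $(u_{i,j})$ lies on its bad hyperplane precisely when $x_0\bigl(\sum_i u_i/\ell_i-1\bigr)\equiv 0\pmod p$, and every configuration surviving the magnitude step satisfies $\sum_i u_i/\ell_i=1$ in $\qq$, hence satisfies this congruence for every $p$ and every $x_0$. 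So the mod-$p$ genericity excludes nothing that actually needs excluding.

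There is a further gap at the end: pairwise-distinct zero-sum perturbations inside a bundle do not force all multiplicities to be one; for $\ell=4$ and $\delta=(-3,-1,1,3)$ the combination $c=(0,2,2,0)$ has length $4$ and $\sum_r c_r\delta_r=0$. The skeleton is repairable if you move all of the genericity into the integer perturbations $\delta_{i,j,r}$ themselves, constrained only by $\sum_r\delta_{i,j,r}=0$ for each bundle: after the magnitude step a configuration $(c_{i,j,r})$ survives if and only if $\sum_{i,j,r} c_{i,j,r}\delta_{i,j,r}=0$, and the linear functionals of this form that vanish identically on the subspace $\{\sum_r\delta_{i,j,r}=0 \ \text{for all}\ i,j\}$ are exactly those with $c_{i,j,r}$ constant in $r$ within each bundle, which combined with $\sum_i u_i/\ell_i=1$ and $c\ge 0$ yields only the trivial bundle factorizations. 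Since the admissible $(c_{i,j,r})$ have entries bounded by $\ell_n$ and are therefore finite in number, a sufficiently generic integer choice of the $\delta$'s avoids every nontrivial hyperplane. You would also need to verify that the chosen generators are genuinely atoms of $H$ (a real issue when $1/\ell_i+1/\ell_{i'}=1/\ell_{i''}$ for some indices), which can be folded into the same genericity. As written, however, the central exclusion step of your proof does not work.
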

\bigskip

\section{Background on Convex Cones}
\label{sec:cones}

The geometry needed in this paper is basic and takes place in either the $\rr$-space $\rr^d$ or the $\qq$-space $\qq^d$, mostly the latter one. We denote the standard inner product of $\rr^d$ by $\langle \, , \rangle$, that is, for all $x = (x_1, \dots, x_d)$ and $y = (y_1, \dots, y_d)$ in $\rr^d$,
\[
	\langle x, y \rangle = \sum_{i=1}^d x_i y_i.
\]
As usual, for $x \in \rr^d$ we let $\norm{x}$ denote the Euclidean norm of $x$. We always consider the space $\rr^d$ endowed with the topology induced by the Euclidean norm. Finally, we let the $\qq$-space $\qq^d$ inherit the inner product and the topology of $\rr^d$.

Let $V$ be a vector space over an ordered field. A nonempty subset $C$ of $V$ is called a \emph{convex cone} provided that $C$ is closed under linear combinations with nonnegative coefficients. Convex cones are clearly convex sets containing $0$. If $X$ is a nonempty subset of $V$, then the set
\[
	\cone(X) := \big\{ c_1 x_1 + \dots + c_n x_n \mid x_i \in X \ \text{and} \ c_i \ge 0 \ \text{for each} \ i \in \ldb 1,n \rdb \big\}
\]
is the smallest cone containing $X$. In this case, $\cone(X)$ is called the \emph{cone generated by} $X$. We say that a cone $C$ is \emph{pointed} if $C \cap -C = \{0\}$. Unless otherwise stated, we assume that the cones we consider here are pointed.

For a nonzero vector $u \in \rr^d$, consider the hyperplane $\sigma := \{x \in \rr^d \mid \langle x, u \rangle = 0 \}$, and denote the closed half-spaces $\{x \in \rr^d \mid \langle x, u \rangle \le 0 \}$ and $\{x \in \rr^d \mid \langle x, u \rangle \ge 0 \}$ by $\sigma^-$ and $\sigma^+$, respectively. If a cone $C$ satisfies that $C \subseteq \sigma^-$ (resp., $C \subseteq \sigma^+$), then $\sigma$ is called a \emph{supporting hyperplane} of $C$ and $\sigma^-$ (resp., $\sigma^+$) is called a \emph{supporting half-space} of $C$. A subset $F$ of $C$ is a \emph{face} if there exists a supporting hyperplane $\sigma$ of $C$ such that $F = C \cap \sigma$. The cone $C$ is said to be \emph{polyhedral} provided that it has only finitely many faces. The Farkas-Minkowski-Weyl Theorem states that a convex cone is polyhedral if and only if it is finitely generated.

A pointed cone in $\rr^d$ is called \emph{rational} provided that it can be generated by finitely many rational (or, equivalently, integer) vectors. Clearly, every rational cone is polyhedral. On the other hand, a \emph{lattice} in $\rr^d$ is an additive subgroup of $\rr^d$ generated by $\rr$-linearly independent vectors.

\begin{theorem}[Gordan's Lemma]
	Let $C$ be a rational cone in $\rr^d$, and let $L \subset \qq^d$ be a lattice. Then $C \cap L$ is a finitely generated monoid.
\end{theorem}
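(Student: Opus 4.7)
The plan is to reduce to the classical case $L = \zz^d$ and then deploy the standard ``fundamental parallelotope plus floor function'' argument.

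\emph{Reduction step.} Since $L \subset \qq^d$ is a lattice, it has a $\zz$-basis of rational vectors. Extending this basis to a $\qq$-basis of $\qq^d$ and performing the corresponding rational linear change of coordinates (which carries rational cones to rational cones, because an invertible rational matrix sends rational generating sets to rational generating sets), I may assume $L = \zz^k \times \{0\}^{d-k}$, where $k = \rank_\zz L$. Then
\[
C \cap L \ = \ \bigl(C \cap (\rr^k \times \{0\}^{d-k})\bigr) \cap \zz^k,
\]
and the inner intersection is the intersection of a rational cone with a rational subspace. This intersection is again a rational cone: Farkas--Minkowski--Weyl gives $C$ a description by finitely many rational half-spaces, which when combined with the rational equations defining the subspace continues to describe a rational polyhedral cone, hence a rationally finitely generated cone. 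After relabeling, I reduce to proving the claim when $L = \zz^d$ and $C \subset \rr^d$ is a rational cone.

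\emph{Main argument.} By the paper's definition of rational cone, I can write $C = \cone(v_1, \ldots, v_n)$ with $v_1, \ldots, v_n \in \zz^d \cap C$, and form the compact parallelotope
\[
K := \left\{ \sum_{i=1}^n t_i v_i \ \middle| \ 0 \le t_i \le 1 \text{ for each } i \in \ldb 1,n \rdb \right\},
\]
which is a continuous image of $[0,1]^n$. Since $\zz^d$ is discrete and $K$ is bounded, $K \cap \zz^d$ is finite, so it suffices to show that this finite set generates $C \cap \zz^d$ as a monoid. Given $x \in C \cap \zz^d$, choose $t_1, \ldots, t_n \ge 0$ with $x = \sum_i t_i v_i$, split $t_i = \lfloor t_i \rfloor + \{t_i\}$, and decompose
\[
x \ = \ \sum_{i=1}^n \lfloor t_i \rfloor v_i \ + \ \sum_{i=1}^n \{t_i\} v_i.
\]
The first summand is an $\nn$-combination of the $v_i$, each of which lies in $K \cap \zz^d$ (take $t_j = \delta_{ij}$). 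The second summand lies in $K$ by construction, and in $\zz^d$ as the difference of the integer vectors $x$ and $\sum_i \lfloor t_i \rfloor v_i$, so it also belongs to $K \cap \zz^d$. Thus $x$ is an $\nn$-combination of elements of $K \cap \zz^d$.

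\emph{Main obstacle.} The floor-function decomposition is routine once the generators of $C$ lie inside $L$. The comparatively subtle step is the opening reduction: verifying both that an invertible rational change of coordinates preserves rationality of cones, and that the intersection of a rational cone with a rational subspace remains rational. Both rest on the standard equivalence (via Farkas--Minkowski--Weyl) between rational finite generation and description by rational half-spaces, so neither is deep, but each relies critically on $L \subset \qq^d$ having a rational $\zz$-basis.
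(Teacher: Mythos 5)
The paper states Gordan's Lemma purely as background and gives no proof of it, so there is no in-paper argument to compare yours against; your proposal has to stand on its own, and it does. It is the standard and correct proof: reduce to the full-rank case $L = \zz^d$ by a rational change of coordinates, then show that the finitely many lattice points of the compact zonotope $K$ generate $C \cap \zz^d$ via the floor/fractional-part decomposition. The only step that genuinely needs the care you flag is the reduction: intersecting a rational cone with the rational subspace $\mathrm{span}_{\rr}(L)$ stays rational only via the \emph{rational} refinement of Farkas--Minkowski--Weyl (a cone generated by finitely many rational vectors is cut out by finitely many rational half-spaces, and conversely), which is stronger than the version the paper quotes but is entirely standard. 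Everything else checks out: the generators can be rescaled to lie in $\zz^d$, $K \cap \zz^d$ is finite because $K$ is compact and $\zz^d$ is discrete, each $v_i$ and the fractional-part vector $\sum_i \{t_i\} v_i$ lie in $K \cap \zz^d \subseteq C \cap \zz^d$, and hence every $x \in C \cap \zz^d$ is an $\nn$-combination of elements of the finite set $K \cap \zz^d$.
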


Let $H$ be a monoid. A submonoid $S$ of $H$ is called \emph{divisor-closed} if for all $x \in H$ and $s \in S$ the fact that $x \mid_H s$ implies that $x \in S$. The monoid $H$ is called \emph{primary} if it is nontrivial and its only divisor-closed submonoids are $\{0\}$ and $H$. Numerical monoids and, in general, additive submonoids of $\qq_{\ge 0}$ are examples of primary monoids. For information about primary monoids, see \cite[Section~2.7]{GH06b}.
Primary monoids in $\mathcal{C}$ have the following geometric characterization.

\begin{theorem} \cite[Theorem~2.4]{GHL95} \label{thm:primary geometric characterization}
	For any $d \ge 2$, a monoid $H$ in $\mathcal{C}_d$ is primary if and only if $\cone(H)^\bullet$ is an open subset of $\qq \otimes_\zz \emph{gp}(H) \subseteq \qq^d$.
\end{theorem}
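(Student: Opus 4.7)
The plan is to combine the $\qq$-topology of $\cone(H)$ inside $W := \qq \otimes_\zz \text{gp}(H)$ with the real convex geometry of $\cone_\rr(H) := \{\sum_i r_i h_i : r_i \in \rr_{\ge 0},\, h_i \in H\} \subseteq W_\rr := \rr \otimes_\zz \text{gp}(H) \cong \rr^d$, along with the standard reformulation that a nontrivial $H$ is primary iff for every $a, s \in H^\bullet$ there exist positive integers $n, m$ with $ns - ma \in H$; indeed, then $a \mid_H ma \mid_H ns$ forces $a$ into the divisor-closed hull of $s$, so this hull must be all of $H$.

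For $(\Leftarrow)$, I would assume $\cone(H)^\bullet$ is open in $W$ and fix $a, s \in H^\bullet$. Openness at $s$ supplies a rational $\epsilon > 0$ with $s - \epsilon a \in \cone(H)$, so I can write $s - \epsilon a = \sum_i q_i h_i$ with $q_i \in \qq_{\ge 0}$ and $h_i \in H$. Multiplying by a positive integer $N$ that clears the denominators of $\epsilon$ and all the $q_i$ yields
\[
Ns - (N\epsilon)\, a = \sum_i (Nq_i)\, h_i \in H,
\]
which is the desired divisibility with $n = N$ and $m = N\epsilon$.

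For $(\Rightarrow)$, I would argue by contrapositive, producing a proper nontrivial divisor-closed submonoid of $H$ whenever $\cone(H)^\bullet$ fails to be open in $W$. Pick $v \in \cone(H)^\bullet$ that is not a $W$-interior point of $\cone(H)$. Then $v$ cannot lie in the $W_\rr$-interior of $\cone_\rr(H)$ either, since any $W_\rr$-open ball around $v$ contained in $\cone_\rr(H)$ would intersect $W$ in a $W$-open neighborhood of $v$ inside $\cone_\rr(H) \cap W = \cone(H)$. Hence $v$ is a boundary point of the closed convex set $\overline{\cone_\rr(H)}$, and the supporting hyperplane theorem yields a nonzero linear functional $u$ on $W_\rr$ with $\langle h, u\rangle \ge 0$ for every $h \in H$ and $\langle v, u\rangle = 0$. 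Setting $F := \{h \in H \mid \langle h, u\rangle = 0\}$, divisor-closedness is immediate: if $f = h + h'$ with $f \in F$ and $h, h' \in H$, both nonnegative summands $\langle h, u\rangle$ and $\langle h', u\rangle$ must vanish. A decomposition $v = \sum q_i h_i$ with $q_i > 0$ and $h_i \in H^\bullet$ places each $h_i$ into $F$, so $F \ne \{0\}$; and because $H$ spans $W$ over $\qq$ while $\ker u \cap W$ has $\qq$-codimension one, some $h \in H$ satisfies $\langle h, u\rangle > 0$, so $F \ne H$, contradicting primality. The one genuine subtlety is the bridge from the failure of $\qq$-openness to the application of a real supporting hyperplane; once this topological comparison is in hand, the face-submonoid construction and the denominator-clearing trick are both routine.
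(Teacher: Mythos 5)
The paper does not actually prove this statement; it imports it wholesale from \cite[Theorem~2.4]{GHL95}, so there is no in-house argument to compare against. Your proof is, in outline, correct and self-contained: the reformulation of primality (for nontrivial $H$) as ``for all $a,s\in H^\bullet$ there are $n,m\ge 1$ with $ns-ma\in H$'' is the standard one and is easily seen to be equivalent to the divisor-closed-submonoid definition via the divisor-closed hull $\{x\in H \mid x\mid_H ns \text{ for some } n\}$ of $s$; the denominator-clearing argument for $(\Leftarrow)$ is fine; and the face $F=\{h\in H\mid \langle h,u\rangle =0\}$ cut out by a supporting functional is indeed a nonzero proper divisor-closed submonoid. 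The one step you flag but do not close is the identity $\cone_\rr(H)\cap W=\cone(H)$, and it does need an argument: a rational point of $\cone_\rr(H)$ lies in $\cone_\rr(F)$ for some finite $F\subseteq H$, the conical Carath\'eodory theorem rewrites it as a nonnegative real combination of an $\rr$-linearly independent subset of $F$, and the coefficients then solve a rational linear system, hence are rational and nonnegative; this puts the point in $\cone(H)$. Two smaller points to tighten: passing from ``$v$ is not interior to $\cone_\rr(H)$'' to ``$v$ is a boundary point of $\overline{\cone_\rr(H)}$'' uses that a convex set with nonempty interior has $\inter(\overline{C})=\inter(C)$, and $\cone_\rr(H)$ has nonempty interior precisely because $\rank(H)=d$; and your claim that $\ker u\cap W$ has $\qq$-codimension one is not quite right, since the supporting functional $u$ need not be rational --- but all you need is that $H\not\subseteq\ker u$, which holds because $H$ spans $W$ over $\qq$ and hence spans $W_\rr$ over $\rr$, while $\ker u$ is a proper subspace. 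With those repairs the proof is complete.
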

\bigskip

\section{The System of Sets of Lengths}
\label{sec:sets of lengths}

In this section we construct, for each $d \ge 2$, a monoid $H$ in $\mathcal{C}_d$ having full system of sets of lengths, that is $\mathcal{L}(H) = \pf$. To begin with, let us argue the following lemma.

\begin{lemma} \label{lem:basic isomorphic representation}
	For every $d \in \zz_{\ge 1}$, each monoid in $\mathcal{C}_d$ is isomorphic to an additive submonoid of $\nn^d$ of rank $d$.
\end{lemma}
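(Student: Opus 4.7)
The plan is to embed $H$ into $\nn^d$ by restricting suitable coordinate projections of the ambient free monoid to $H$.

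First, I would unpack the definition: if $H \in \mathcal{C}_d$ then, after identifying any finite-rank free commutative monoid with some $\nn^n$, we may assume $H$ is a submonoid of $\nn^n$ for some $n \ge d$. Passing to Grothendieck groups, the inclusion $H \hookrightarrow \nn^n$ extends to an inclusion $G := \mathrm{gp}(H) \hookrightarrow \zz^n$, and tensoring with $\rr$ gives a $d$-dimensional subspace $W := \qq \otimes_\zz G \otimes_\qq \rr \subseteq \rr^n$ (using $\mathrm{rank}(H) = d$).

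Next, I would select $d$ of the standard coordinate projections $\pi_1, \ldots, \pi_n \colon \rr^n \to \rr$ whose restrictions to $W$ form a basis of $W^*$. This is possible because $\pi_1, \ldots, \pi_n$ span $(\rr^n)^*$, so their restrictions span $W^*$, and any spanning set of a $d$-dimensional vector space contains a basis. Choose indices $i_1, \ldots, i_d$ accordingly and define
\[
    \Phi \colon \rr^n \longrightarrow \rr^d, \qquad \Phi(x) := \bigl(\pi_{i_1}(x), \ldots, \pi_{i_d}(x)\bigr).
\]
By construction, $\Phi|_W \colon W \to \rr^d$ is a linear isomorphism, hence in particular injective on $W$.

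Finally, I would verify that $\Phi|_H$ is the desired embedding. Since $H \subseteq \nn^n$, each coordinate of $\Phi(h)$ equals a nonnegative integer coordinate of $h$, so $\Phi(H) \subseteq \nn^d$. Injectivity follows because $H \subseteq G \subseteq W$ and $\Phi|_W$ is injective, so $\Phi|_H$ is an injective monoid homomorphism, i.e., a monoid isomorphism onto its image $\Phi(H) \subseteq \nn^d$. For the rank claim, $\mathrm{gp}(\Phi(H)) = \Phi(G)$ is a subgroup of $\zz^d$ of rank $d$ (as $\Phi|_G$ is an injective group homomorphism from a rank-$d$ group), so $\mathrm{rank}(\Phi(H)) = d$.

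There is no serious obstacle here; the only point requiring a brief justification is the existence of $d$ coordinate projections whose restrictions are linearly independent on $W$, which is the standard observation that any spanning family of a dual space contains a basis. Everything else is a direct verification using the fact that coordinate projections send $\nn^n$ to $\nn$.
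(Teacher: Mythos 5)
Your proof is correct, and it takes a genuinely different and more elementary route than the paper. The paper forms the intermediate monoid $H' := \nn^r \cap V$ (where $V$ is the $\qq$-span of $H$), invokes Gordan's Lemma to see that $H'$ is finitely generated, and then cites \cite[Proposition~2.17]{BG09} to embed the finitely generated rank-$d$ monoid $H'$ (and hence $H$) into $\nn^d$. You instead observe that since the coordinate functionals $\pi_1,\dots,\pi_n$ restrict to a spanning set of $W^*$, some $d$ of them restrict to a basis, and the corresponding projection $\Phi$ is injective on $W \supseteq \mathrm{gp}(H) \supseteq H$ while manifestly carrying $\nn^n$ into $\nn^d$; the rank statement follows because $\Phi$ is injective on $\mathrm{gp}(H)$. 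This bypasses both Gordan's Lemma and the external citation, exploiting the fact that $H$ already sits inside $\nn^n$ so that nonnegativity of the image is automatic. What the paper's detour buys is the stronger intermediate fact that $H$ embeds in a \emph{finitely generated} (affine) rank-$d$ monoid, which is not needed for the lemma as stated; your argument is self-contained and shorter. The only step deserving explicit mention, as you note, is that the surjectivity of the restriction map $(\rr^n)^* \to W^*$ lets you extract the $d$ indices, and that is standard linear algebra.
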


\begin{proof}
	Let $H$ be a monoid in $\mathcal{C}_d$, and suppose that $H$ is a submonoid of a free commutative monoid of rank $r$ for some $r \in \nn$ with $r \ge d$. There is no loss of generality in assuming that $H$ is a submonoid of $\nn^r \subseteq \qq^r$. Let $V$ be the subspace of the $\qq$-sapce $\qq^r$ generated by~$H$. Since $H$ has rank $d$, the subspace $V$ has dimension $d$. Now consider the submonoid $H' := \nn^r \cap V$ of $\nn^r$. As $H'$ is the intersection of the rational cone $\cone(\nn^r \cap V)$ and the lattice $\zz^r \cap V \cong \zz^d$, it follows by Gordan's Lemma that $H'$ is finitely generated. On the other hand, $H \subseteq H' \subseteq V$ guarantees that $\rank(H') = d$. Since $H'$ is a finitely generated additive submonoid of $\nn^r$ of rank $d$, it follows by~\cite[Proposition~2.17]{BG09} that $H'$ is isomorphic to an additive submonoid of $\nn^d$. This, in turn, implies that $H$ is isomorphic to an additive submonoid of $\nn^d$.
\end{proof}

We proceed to show that any monoid in $\mathcal{C}$ is an FF-monoid and, therefore, a BF-monoid.

\begin{prop}
	Each monoid in $\mathcal{C}$ is an FF-monoid.
\end{prop}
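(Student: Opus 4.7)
The plan is to reduce to the concrete setting $H \subseteq \nn^d$ via Lemma~\ref{lem:basic isomorphic representation} and then use the coordinate-sum norm on $\nn^d$ to simultaneously bound the atoms that can appear and the length of any factorization.

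First, by Lemma~\ref{lem:basic isomorphic representation}, since FF is an isomorphism invariant, I may assume that $H$ is an additive submonoid of $\nn^d$ for some $d \ge 1$. For $y = (y_1, \dots, y_d) \in \nn^d$, set $|y|_1 := y_1 + \cdots + y_d$. The crucial feature of this norm is that it is additive on $\nn^d$ (so in particular on $H$) and that $|y|_1 \ge 1$ for every $y \in (\nn^d)^\bullet$; consequently $|a|_1 \ge 1$ for every $a \in \mathcal{A}(H)$.

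Next I would verify that $H$ is atomic, since a priori the proposition speaks about $\mathsf{Z}(x)$. Given $x \in H^\bullet$, any chain of non-trivial decompositions $x = y + z$ with $y,z \in H^\bullet$, iterated on the summands, must terminate: at each step the norm strictly decreases because of the additivity of $|\cdot|_1$ together with $|y|_1, |z|_1 \ge 1$. Hence $x$ admits an expression as a finite sum of atoms, showing $\mathsf{Z}(x) \ne \emptyset$ and $H$ atomic.

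Finally, fix $x \in H$ and let $x = a_1 + \cdots + a_n$ be any factorization with $a_i \in \mathcal{A}(H)$. Two finite bounds fall out at once: on the one hand, each $a_i$ satisfies $a_i \le x$ componentwise, so the $a_i$ belong to the finite set $\bigl(\prod_{j=1}^d \ldb 0, x_j \rdb\bigr) \cap \mathcal{A}(H)$; on the other hand, applying $|\cdot|_1$ to both sides gives $n \le |x|_1$. Thus the number of factorizations of $x$ is at most the number of multisets of size at most $|x|_1$ drawn from a finite set, which is finite. Since this estimate holds for every $x \in H$, $H$ is an FF-monoid.

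There is no real obstacle: the substantive work was already done in Lemma~\ref{lem:basic isomorphic representation}, which lets us pass to $\nn^d$ where the coordinate-sum norm is a strictly monotone, additive length function that controls everything at once.
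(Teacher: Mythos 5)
Your proof is correct and follows essentially the same route as the paper: reduce to a submonoid of $\nn^d$ via Lemma~\ref{lem:basic isomorphic representation}, then use a monotone norm to conclude that only finitely many atoms can divide a given element. The paper works with the Euclidean norm where you use the coordinate-sum norm, and you are somewhat more explicit about atomicity and the bound on factorization lengths, but the substance is identical.
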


\begin{proof}
	By Lemma~\ref{lem:basic isomorphic representation}, it suffices to show that for every $d \in \zz_{\ge 1}$, any additive submonoid $H$ of $\nn^d$ is an FF-monoid. Fix $x \in H$. It is clear that $\langle x, y \rangle \ge 0$ for all $y \in H$. Thus, $y \mid_H x$ implies that $\norm{y} \le \norm{x}$. As a result, the set $\{a \in \mathcal{A}(H) \mid a |_H x\}$ is finite, which implies that $\mathsf{Z}(x)$ is also finite. Hence $H$ is an FF-monoid, as desired.
\end{proof}

In particular, every monoid in $\mathcal{C}$ is a BF-monoid. Therefore to show that a monoid~$H$ in $\mathcal{C}$ has full system of sets of lengths, it suffices to verify that $\pf \subseteq \mathcal{L}(H)$. Before proceeding with our main result, let us exhibit some examples of families of atomic monoids and domains that have recently been proved to have full systems of sets of lengths.
\medskip

The first family of atomic monoids with full systems of sets of lengths was given by F.~Kainrath~\cite{fK99} in the context of Krull monoids. A monoid $K$ is called a \emph{Krull monoid} if there exists a monoid homomorphism $\phi \colon K \to D$, where $D$ is a free commutative monoid, satisfying the next two conditions:
\begin{enumerate}
	\item if $a, b \in K$ and $\phi(a) \mid_D \phi(b)$, then $a \mid_K b$;
	\vspace{2pt}
	\item for every $d \in D$ there exist $a_1, \dots, a_n \in K$ with $d = \gcd\{\phi(a_1), \dots, \phi(a_n)\}$.
\end{enumerate}
The basis elements of $D$ are called the \emph{prime divisors} of $K$, and the abelian group $\text{Cl}(K) := D/\phi(K)$ is called the \emph{class group} of $K$. As Krull monoids are isomorphic to submonoids of free commutative monoids, Krull monoids are atomic (see \cite[Section~2.3]{GH06b} for further details about Krull monoids).

\begin{theorem} \cite[Theorem~1]{fK99}
	Let $H$ be a Krull monoid with infinite class group in which every divisor class contains a prime divisor. For a finite subset $L$ of $\zz_{\ge 2}$ there exists $x \in H$ such that $\mathsf{L}(x) = L$.
\end{theorem}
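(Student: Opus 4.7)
The plan is first to reduce the problem to a purely combinatorial question about zero-sum sequences over the class group, and then to exploit the infiniteness of that group to realize arbitrary prescribed sets of lengths. Concretely, since every divisor class of $H$ contains a prime divisor, I can define a transfer homomorphism $\beta \colon H \to \mathcal{B}(G)$ into the block monoid $\mathcal{B}(G)$ of $G := \text{Cl}(H)$ by sending each prime $p$ of the auxiliary free monoid $D$ to its class $[p] \in G$ and extending multiplicatively to the image of $H$ in $D$. Using condition~(1) of the Krull definition and the hypothesis that every class contains a prime divisor, a standard argument shows that $\beta$ preserves sets of lengths, i.e., $\mathsf{L}_H(x) = \mathsf{L}_{\mathcal{B}(G)}(\beta(x))$ for all $x \in H$. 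So it suffices to realize each finite $L \subset \zz_{\ge 2}$ as the set of lengths of some zero-sum sequence over the infinite abelian group $G$.

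Writing $L = \{l_1 < l_2 < \cdots < l_k\}$, I would construct the target zero-sum sequence $S$ as a concatenation $S = S_1 \cdot S_2 \cdots S_k$ of carefully designed zero-sum blocks, where $S_i$ encodes the jump from length $l_1$ to length $l_i$. Since $G$ is infinite, it contains either an element of infinite order or an infinite direct sum of torsion subgroups, so I can extract a family of independent elements $g_1, g_2, \dots \in G$ used to populate the blocks $S_i$. Concretely, I would use ``swap'' gadgets: small configurations of elements admitting two factorizations into atoms whose lengths differ by a prescribed integer. Combining such gadgets across the $S_i$ produces factorizations of $S$ realizing every $l_i \in L$.

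The hard part will be the containment $\mathsf{L}(S) \subseteq L$: producing factorizations of each prescribed length $l_i$ is the easy direction once the gadgets are in place, but excluding every unwanted length in $\ldb l_1, l_k \rdb \setminus L$ (and beyond) is genuinely delicate. A single unforeseen relation among the chosen elements of $G$ can create a bypass factorization of length outside $L$, so the construction must carefully prevent cross-interactions between gadgets. I expect to proceed by induction on $k$, at each step inserting a new gadget built from freshly chosen independent elements, and arguing that its only admissible interactions with earlier gadgets are through the identity; the main combinatorial work will be to verify this no-bypass property, presumably via structural results on minimal zero-sum sequences and the Davenport constants of the finite subgroups of $G$ that the construction actually uses.
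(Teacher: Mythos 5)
This theorem is not proved in the paper at all --- it is quoted verbatim from Kainrath's article as background --- so the only question is whether your outline would stand on its own as a proof. Your first step, transferring the problem to the block monoid $\mathcal{B}(G)$ of $G = \text{Cl}(H)$ by sending each prime divisor to its class, is correct and is exactly how the literature (Kainrath included) reduces the statement to a combinatorial question about zero-sum sequences over $G$; the hypothesis that every class contains a prime divisor is precisely what makes this transfer surjective and length-preserving.

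From that point on, however, there are two genuine gaps. First, the group-theoretic claim that an infinite abelian $G$ must contain ``a family of independent elements $g_1, g_2, \dots$'' is false: $G$ could be $\zz$, $\qq$, or a Pr\"ufer group $\zz(p^\infty)$, each infinite of rank one, so that no two nontrivial elements are independent. Any correct proof has to split into cases (an infinite independent family of torsion elements; a subgroup isomorphic to $\zz$; a subgroup isomorphic to $\zz(p^\infty)$) and build genuinely different gadgets in each, controlling interactions by the arithmetic of the chosen elements rather than by independence, which is simply unavailable in the rank-one cases. Second, and more importantly, you explicitly defer the containment $\mathsf{L}(S) \subseteq L$ --- your ``no-bypass'' property --- to future combinatorial work. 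That containment is the entire content of the theorem: realizing each prescribed length is the easy direction, while excluding every length in $\ldb \min L, \max L \rdb \setminus L$ is where all the difficulty lies, and your outline offers no mechanism for it beyond the independence assumption that the first gap removes. As written, this is a reasonable plan of attack, not a proof.
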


In the same direction, S.~Frisch has proved that the multiplicative monoid of the domain of integer-valued polynomials $\text{Int}(\zz)$ also has full system of sets of lengths (see ~\cite{sF13}). This result was recently generalized in~\cite{FNR17} to the domain $\text{Int}(\mathcal{O}_K)$ of polynomials over a given number field $K$ stabilizing the ring of integers $\mathcal{O}_K$.

\begin{theorem} \cite[Theorem~1]{FNR17}
	Let $K$ be a number field with ring of integers $\mathcal{O}_K$. Moreover, let $1 \le m_1 \le \dots \le m_n$ be natural numbers. Then there exists a polynomial in $\emph{Int}(\mathcal{O}_K)$ with $n$ essentially different factorizations into irreducible polynomials in $\emph{Int}(\mathcal{O}_K)$ where the lengths of these factorizations are $m_1 + 1, \dots, m_n + 1$.
\end{theorem}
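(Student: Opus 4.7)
The plan is to construct a polynomial $P(X) \in \text{Int}(\mathcal{O}_K)$ admitting $n$ essentially different factorizations of the prescribed lengths by adapting the argument Frisch developed for $\text{Int}(\zz)$ in~\cite{sF13}. The construction hinges on ``fixed-divisor'' polynomials: for a prime ideal $\mathfrak{p}$ of $\mathcal{O}_K$ with $N(\mathfrak{p}) = q$ and uniformizer $\pi$, if $a_1,\ldots,a_q$ is a full set of residues modulo $\mathfrak{p}$ then $\pi^{-1}\prod_{j=1}^q(X-a_j)$ lies in $\text{Int}(\mathcal{O}_K)$ at least locally at $\mathfrak{p}$, and shuffling the roots among disjoint ``packets,'' each packet a complete set of residues, produces inequivalent irreducible factorizations of a common product in $\text{Int}(\mathcal{O}_K)$.

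First I would choose $n$ distinct principal prime ideals $\mathfrak{p}_1 = (\pi_1), \ldots, \mathfrak{p}_n = (\pi_n)$ of $\mathcal{O}_K$ whose norms all exceed $m_n$; such primes exist in abundance by Chebotarev applied to the Hilbert class field, and principality can be arranged by multiplying out by a class-number power if $\mathcal{O}_K$ is not already a PID. Second, for each $i \in \ldb 1,n \rdb$ I would build a local ingredient $F_i(X) \in \text{Int}(\mathcal{O}_K)$ whose canonical ``long'' factorization has length $m_i + 1$: pick $(m_i+1) N(\mathfrak{p}_i)$ elements of $\mathcal{O}_K$ arranged into $m_i + 1$ packets, each packet a complete set of residues modulo $\mathfrak{p}_i$, and normalize by $\pi_i^{m_i+1}$. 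Regrouping the same roots into a single packet of size $(m_i+1)N(\mathfrak{p}_i)$, in which every residue class appears $m_i+1$ times, produces an alternative ``short'' factorization of $F_i$ into a single irreducible.

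The target polynomial $P(X) = \prod_{i=1}^n F_i(X)$ would then admit, for each $i$, a factorization in which $F_i$ is taken in its long form of length $m_i + 1$ while each $F_j$ with $j \neq i$ is taken in its short irreducible form, giving a total length of $m_i + 1$. The main obstacle, and where most of the proof's energy would go, lies in two points. First, one must verify that each packet-factor is globally irreducible in $\text{Int}(\mathcal{O}_K)$ and not merely in the localization at $\mathfrak{p}_i$; this is handled by a Newton-polygon analysis at $\mathfrak{p}_i$ together with the requirement that the chosen residue representatives be in ``general position'' modulo every other $\mathfrak{p}_j$, so no spurious $\mathfrak{p}_j$-congruence allows further splitting. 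Second, one must argue that the $n$ factorizations are essentially different; this follows by reading off the $\mathfrak{p}_i$-adic Newton polygons of the irreducible factors, which detect the packet structure and hence the anchor prime of each factorization. Once these local-to-global verifications are carried out, $P$ is the polynomial asserted by the theorem.
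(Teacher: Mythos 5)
This statement is quoted verbatim from~\cite{FNR17}; the present paper does not reprove it, so your attempt has to be judged against the known construction of Frisch--Nakato--Rissner (which extends Frisch's argument for $\text{Int}(\zz)$ in~\cite{sF13}). Judged that way, the core of your proposal fails at two concrete points. First, the ``short irreducible form'' of $F_i$ does not exist: the element $\pi_i^{-(m_i+1)}\prod_{t,s}(X-a_{t,s})$, whose roots consist of $m_i+1$ disjoint complete residue systems modulo $\mathfrak{p}_i$, visibly factors as the product of the $m_i+1$ packet polynomials $\pi_i^{-1}\prod_s(X-a_{t,s})$, each of which lies in $\text{Int}(\mathcal{O}_K)$. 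Regrouping the roots into one big packet does not change the element, and an element that admits a factorization into $m_i+1$ non-units is not irreducible; you cannot manufacture a length-one factorization by describing the same product differently. Second, even granting both forms, the arithmetic is wrong: taking $F_i$ ``long'' and every $F_j$ with $j\neq i$ ``short'' would give $P$ a factorization of length $(m_i+1)+(n-1)$, not $m_i+1$, and since the choices at distinct indices are independent you would obtain one factorization for each subset $S\subseteq\{1,\dots,n\}$, of length $\sum_{i\in S}(m_i+1)+(n-|S|)$. That is a shifted sumset of the $m_i$'s with (at least) $2^n$ members, not the prescribed set $\{m_1+1,\dots,m_n+1\}$ --- and the application in Section~4 of this paper genuinely needs $\mathsf{L}(P)$ to \emph{equal} the prescribed set, not merely contain some of its elements.

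The actual proof avoids a product of independent local gadgets entirely. It builds a \emph{single} numerator, a product of monic linear factors whose fixed divisor is a prescribed product of prime powers, in which the same linear factors serve simultaneously as members of residue systems for several of the primes $\mathfrak{p}_1,\dots,\mathfrak{p}_n$. This interlocking is engineered so that any allocation of the fixed divisor to irreducible factors forces one global choice of an index $k$ and then splits the numerator into exactly $m_k+1$ irreducible pieces; it is precisely this sharing that kills the spurious mixed factorizations your construction would create. Two smaller corrections: irreducibility of the packet factors in $\text{Int}(\mathcal{O}_K)$ is proved by computing fixed divisors (no proper subproduct of the linear factors captures a full residue system at the relevant prime), not by Newton polygons --- these polynomials split into linear factors over $K$, so $\mathfrak{p}$-adic irreducibility is not the relevant notion; and the passage from $\zz$ to $\mathcal{O}_K$ is not just a matter of selecting principal primes of large norm, since controlling fixed divisors at the primes of small residue field and handling the class group is where much of the additional work in~\cite{FNR17} lies.
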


A \emph{Puiseux monoid} is an additive submonoid of $\qq_{\ge 0}$. Although Puiseux monoids are natural generalizations of numerical monoids, the former are not necessarily finitely generated or atomic. Moreover, if an atomic Puiseux monoid $H$ is not isomorphic to a numerical monoid, then $|\mathcal{A}(H)| = \infty$. The atomic structure and factorization theory of Puiseux monoids have only been studied recently (see \cite{fG17} and~\cite{GG17}). There are Puiseux monoids having full systems of sets of lengths.

\begin{theorem} \cite[Theorem~3.6]{fG17c}
	There exists a Puiseux monoid with full system of sets of lengths.
\end{theorem}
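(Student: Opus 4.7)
The plan is to construct the desired Puiseux monoid $M$ by amalgamating a countable family of suitably scaled numerical monoids inside $\qq_{\ge 0}$, using distinct primes as denominators so that factorizations at different scales cannot interact. Since $\mathsf{L}(0) = \{0\}$ and $\mathsf{L}(a) = \{1\}$ for any atom $a$, it suffices to realize every finite subset $L \subset \zz_{\ge 2}$ as $\mathsf{L}_M(y)$ for some $y \in M$.

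First, enumerate these finite subsets as $L_1, L_2, \dots$, and, using Theorem~\ref{thm:realization theorem in numerical monoids}, pick for each $k \ge 1$ a numerical monoid $H_k$ with $\mathcal{A}(H_k) = \{a_{k,1},\dots,a_{k,n_k}\}$ and an element $x_k \in H_k$ satisfying $\mathsf{L}_{H_k}(x_k) = L_k$. Next, choose distinct primes $p_1 < p_2 < \cdots$ with
\[
	p_k \, > \, \max\{x_k,\, a_{k,1},\, \dots,\, a_{k,n_k}\} \quad \text{for every } k \ge 1,
\]
and define
\[
	M \, := \, \left\langle \frac{a_{k,i}}{p_k} \, : \, k \ge 1, \ 1 \le i \le n_k \right\rangle \subseteq \qq_{\ge 0}.
\]
The two claims to verify are: (a) each generator $a_{k,i}/p_k$ is an atom of $M$, so $\mathcal{A}(M) = \{a_{k,i}/p_k\}$; and (b) $\mathsf{L}_M(x_k/p_k) = L_k$ for every $k$. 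Together they yield $\mathcal{L}(M) = \pf$.

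Both claims reduce to the following decoupling lemma, proved via $p$-adic valuations: if $c \in H_k$ with $p_k > c$, and $c/p_k = \sum_{k',i} c_{k',i}(a_{k',i}/p_{k'})$ is any representation in terms of the generators, then $c_{k',i} = 0$ whenever $k' \ne k$. Indeed, setting $b_{k'} := \sum_i c_{k',i} a_{k',i} \in \nn$, the relation becomes $c/p_k = \sum_{k'} b_{k'}/p_{k'}$. For any fixed $k^* \ne k$, the left side has $\mathsf{v}_{p_{k^*}}(c/p_k) = \mathsf{v}_{p_{k^*}}(c) \ge 0$ since $p_{k^*} \ne p_k$; on the right side, every summand with $k' \ne k^*$ has non-negative $p_{k^*}$-valuation, while the summand $b_{k^*}/p_{k^*}$ has valuation $\mathsf{v}_{p_{k^*}}(b_{k^*}) - 1$. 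If $p_{k^*} \nmid b_{k^*}$, this summand would be the unique one with negative valuation $-1$, forcing the whole sum to have valuation $-1$, a contradiction. So $p_{k^*} \mid b_{k^*}$, and if $b_{k^*} > 0$ then $b_{k^*} \ge p_{k^*}$, giving $c/p_k \ge b_{k^*}/p_{k^*} \ge 1$, i.e.\ $p_k \le c$, contrary to the hypothesis. Thus $b_{k^*} = 0$ for every $k^* \ne k$.

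Applying the lemma to $c = a_{k,i}$ yields (a): a nontrivial factorization would produce $a_{k,i} = \sum_j c_{k,j} a_{k,j}$ in $H_k$ with $\sum_j c_{k,j} \ge 2$, contradicting that $a_{k,i}$ is an atom of $H_k$. Applying it to $c = x_k$ shows every factorization of $x_k/p_k$ in $M$ has the form $\sum_i c_{k,i}(a_{k,i}/p_k)$ with $\sum_i c_{k,i} a_{k,i} = x_k$ in $H_k$; this is a length-preserving bijection $\mathsf{Z}_M(x_k/p_k) \leftrightarrow \mathsf{Z}_{H_k}(x_k)$, whence $\mathsf{L}_M(x_k/p_k) = \mathsf{L}_{H_k}(x_k) = L_k$, proving (b). The main obstacle is the $p$-adic bookkeeping in the decoupling step — isolating the unique summand that can contribute negative $p_{k^*}$-valuation and converting the ensuing divisibility $p_{k^*} \mid b_{k^*}$ into a size bound that collides with $p_k > c$; once this is in hand, everything else is essentially formal.
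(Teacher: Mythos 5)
Your decoupling lemma is correct, and it does establish the inclusion $\pf \subseteq \mathcal{L}(M)$: the $p$-adic valuation argument cleanly confines every factorization of $x_k/p_k$ to the $k$-th block, and your claims (a) and (b) follow. The gap is the reverse inclusion. In this paper ``full system of sets of lengths'' means $\mathcal{L}(M) = \pf$ for a \emph{BF-monoid} $M$, so you must also show that every set of lengths of $M$ is finite. That is automatic for submonoids of $\nn^d$ (every such monoid is an FF-monoid, as shown in Section~4), but it is emphatically not automatic for Puiseux monoids, and your construction does not guarantee it. Concretely, consider the element $1$. Since each $H_k$ is cofinite in $\nn$, whenever $\max\{x_k, a_{k,1},\dots,a_{k,n_k}\}$ exceeds the Frobenius number of $H_k$ \emph{every} admissible prime $p_k$ lies in $H_k$, so you cannot avoid $p_k \in H_k$ by choosing $p_k$ differently; then $1 = p_k/p_k \in M$. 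Your own valuation argument, applied to $1$, shows that any representation $1 = \sum_k b_k/p_k$ with $b_k \in H_k$ has $p_k \mid b_k$ for all $k$, hence exactly one block contributes $b_{k_0} = p_{k_0}$, and therefore $\mathsf{L}_M(1) = \bigcup_{k \,:\, p_k \in H_k} \mathsf{L}_{H_k}(p_k)$. Nothing in your hypotheses prevents $p_k$ from being enormous compared to $\max_i a_{k,i}$ for infinitely many $k$, in which case $\min \mathsf{L}_{H_k}(p_k) \ge p_k/\max_i a_{k,i}$ is unbounded and $\mathsf{L}_M(1)$ is infinite, so $\mathcal{L}(M) \ne \pf$.

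Note also that the standard remedy --- rescaling so that the atoms are bounded away from $0$, which is what makes the analogous construction in Theorem~\ref{thm:GAM with full system of sets of lengths} work inside the FF-monoid $\nn^2$ --- is not available here: since $p_k > x_k \ge a_{k,1}\cdot \max L_k$, the atom $a_{k,1}/p_k$ is forced below $1/\max L_k$, which tends to $0$ along any enumeration of $\pf$. So the BF property must be argued by controlling the arithmetic of the blocks themselves (for instance, arranging the data so that for each fixed $x \in M$ only boundedly many lengths can arise from representations in which some block contributes a multiple of $p_k$). This is the genuinely delicate part of the cited theorem, and it is missing from your argument.
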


It was proved in~\cite{GHL07} that for $d$ large enough there exists a primary monoid in $\mathcal{C}_d$ having full system of sets of lengths. Now we exhibit a primary monoid in $\mathcal{C}_2$ with full system of sets of lengths. Then we use such a monoid to construct, for every $d \in \zz_{\ge 2}$, a monoid in $\mathcal{C}_d$ with full system of sets of lengths. For the remaining of this paper, we use the following notation: for a nonzero $x \in \rr^2_{\ge 0}$, we let $\slp(x) \in \rr_{\ge 0} \cup \{\infty\}$ denote the slope of the line $\rr x$, and for $X \subset \rr_{\ge 0}^2$ we set
\[
	\slp(X) := \{\slp(x) \mid x \in X^\bullet\}.
\]

\begin{theorem} \label{thm:GAM with full system of sets of lengths}
	There exists a primary monoid in $\mathcal{C}_2$ having full system of sets of lengths.
\end{theorem}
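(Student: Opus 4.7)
The plan is to build $H$ by embedding, along carefully chosen rays in $\mathbb{N}^2$, numerical monoids provided by the Geroldinger-Schmid realization theorem (Theorem~\ref{thm:realization theorem in numerical monoids}). Enumerate $\pf \setminus \big\{\{0\},\{1\}\big\}$ as $\{L_n\}_{n \ge 1}$ and, for each $n$, apply Theorem~\ref{thm:realization theorem in numerical monoids} to obtain a numerical monoid $N_n = \langle a_{n,1}, \dots, a_{n,k_n}\rangle$ together with a squarefree element $x_n \in N_n$ satisfying $\mathsf{L}_{N_n}(x_n) = L_n$. Fix once and for all an open interval $(c_1, c_2) \subset (0,\infty)$ (say $(1,2)$), and for each $n$ choose a rational slope $\alpha_n \in (c_1,c_2)$ and let $v_n = (q_n, p_n) \in \mathbb{N}^2$ be the primitive integer vector with $\slp(v_n) = \alpha_n$. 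Declare
\[
    H := \langle a_{n,j}\, v_n \mid n \ge 1,\ 1 \le j \le k_n \rangle \subseteq \nn^2,
\]
which lies in $\mathcal{C}_2$ since the chosen slopes are distinct.

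The slope sequence $\{\alpha_n\}$ will be chosen inductively to satisfy two properties: (i) density in $(c_1, c_2)$, and (ii) an arithmetic separation condition (to be specified in the next paragraph). Property (i) gives primary-ness: because finite nonnegative combinations of $v_n$'s produce vectors whose slopes are weighted averages of the $\alpha_n$'s and thus land in $(c_1,c_2)$, density forces $\cone(H)^\bullet = \{(a,b) \in \qq^2 : a > 0,\ c_1 < b/a < c_2\}$, which is an open subset of $\qq^2 \cong \qq \otimes_\zz \emph{gp}(H)$; Theorem~\ref{thm:primary geometric characterization} then yields that $H$ is primary.

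For the system of sets of lengths, the sets $\{0\}$ and $\{1\}$ lie in $\mathcal{L}(H)$ trivially. For a finite $L_n \subset \zz_{\ge 2}$, set $y_n := x_n v_n \in H$. The inclusion $L_n \subseteq \mathsf{L}_H(y_n)$ is immediate: every factorization of $x_n$ in $N_n$ lifts, via $m \mapsto m v_n$, to a factorization of $y_n$ in $H$ of the same length. Conversely, each $H$-factorization of $y_n$ is obtained by choosing nonnegative integers $s_m \in N_m$ with $\sum_m s_m v_m = y_n$ (finitely many nonzero) together with an $N_m$-factorization of each $s_m$, and its length is $\sum_m \ell_m$ with $\ell_m \in \mathsf{L}_{N_m}(s_m)$. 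Thus the reverse inclusion reduces to the \emph{key claim}: the only such tuple $(s_m)$ is $s_n = x_n$, $s_m = 0$ for $m \neq n$. Given the claim, $\mathsf{L}_H(y_n) = \mathsf{L}_{N_n}(x_n) = L_n$, and hence $\mathcal{L}(H) = \pf$.

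The main obstacle is establishing the key claim, i.e.\ condition (ii). The equation $\sum_m s_m v_m = x_n v_n$ is, coordinate-wise, the pair $\sum_m s_m q_m = x_n q_n$ and $\sum_m s_m p_m = x_n p_n$, which is equivalent to the single scalar constraint $\sum_{m \ne n} s_m q_m(\alpha_m - \alpha_n) = 0$ with $s_m \in N_m$. The numerical-monoid structure forces each nonzero $s_m$ to be at least $\min \mathcal{A}(N_m) \ge 2$, so only finitely many $m$ can contribute (those with $q_m \le x_n q_n$, say). The plan is to construct the slopes recursively: at stage $n$, after $\alpha_1, \dots, \alpha_{n-1}$ and the monoids $N_1, \dots, N_n$ are fixed, only countably many values of $\alpha_n \in (c_1, c_2)$ could permit a nontrivial decomposition of some $y_m = x_m v_m$ ($m \le n$) using $v_n$ together with previously fixed $v_{m'}$'s, and likewise only countably many values would allow some $a_{n,j} v_n$ itself to be decomposed using previous atoms; avoid those countably many bad values and keep $\{\alpha_n\}$ dense in $(c_1,c_2)$. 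Verifying that this inductive bookkeeping genuinely rules out all cross-factorizations — in particular, for every future element $y_m$ ($m > n$), no later-chosen $v_{m'}$ can conspire with earlier vectors to produce a new decomposition of $y_m$ — is the delicate step and the technical heart of the proof.
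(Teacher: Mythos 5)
Your overall architecture is sound and matches the paper's in outline (realize each $L\in\pf$ inside a numerical monoid via Theorem~\ref{thm:realization theorem in numerical monoids}, transplant these onto distinct rays of $\nn^2$ whose slopes accumulate so that $\cone(H)^\bullet$ is open, then invoke Theorem~\ref{thm:primary geometric characterization}). The reductions you make are also correct: the reverse inclusion $\mathsf{L}_H(y_n)\subseteq L_n$ does come down to showing that $x_nv_n$ admits no cross-ray decomposition, and the scalar constraint $\sum_{m\neq n}s_mq_m(\alpha_m-\alpha_n)=0$ is the right way to see it. The problem is that you stop exactly there: the ``key claim'' is the entire content of the theorem beyond bookkeeping, and your proposal replaces its proof with a declaration that a generic inductive choice of slopes should work, explicitly flagging the verification as ``the delicate step and the technical heart of the proof.'' As written, that is a gap, not a proof. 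To close it you would have to organize the avoidance argument carefully: a bad identity $\sum_j s_jv_j=x_mv_m$ (or a decomposition of a generator $a_{m,j}v_m$, which you also need to exclude so that the generators are genuinely atoms and the forward inclusion $L_n\subseteq\mathsf{L}_H(y_n)$ holds) involves finitely many rays, so it has a well-defined largest participating index $k$; at stage $k$ all other data is fixed and, for each of the countably many choices of that data, the constraint pins down at most one forbidden value of $\alpha_k$, so one can avoid a countable set at each stage while keeping the slopes accumulating at both endpoints. This does work, but it must be said; in particular you must address the circularity that the bound on admissible coefficients $s_kq_k$ involves $q_k$, which itself depends on the $\alpha_k$ being chosen (parametrize by the integer $t_k=s_kq_k$ instead), and you must treat the case where the decomposed element's own index $m$ is the largest participant.

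For comparison, the paper avoids any ``generic choice'' argument by building in a concrete, checkable obstruction. It splits the slopes into two monotone subsequences (odd-indexed slopes decreasing to $\ell$, even-indexed increasing to $L$, with every odd slope below every even slope) and, crucially, rescales the monoids $H_1,H_2,\dots$ in order so that every atom of $H_{n+1}$ has norm exceeding $\norm{x_n}$ and the norms of all atoms of $H_n$ (inequality~(\ref{eq:rescaling consequence 2})). Then if an atom $a$ with $\slp(a)\neq\slp(x_n)$ divided $x_n$, the parallelogram picture forces $\norm{a}<\norm{x_n}$, while the monotone slope arrangement forces $a$ to come from some $H_m$ with $m>n$, whence $\norm{a}>\norm{x_n}$ by the rescaling --- a contradiction with no case analysis over hypothetical integer relations. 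You may either adopt that mechanism or fully execute your inductive avoidance; but some such argument must actually appear.
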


\begin{proof}
	As $\pf$ is a countable collection, we can list its members. Let $S_1, S_2, \dots$ be an enumeration of the members of $\pf$. Fix $\ell,L \in \qq_{> 0}$ such that $\ell < L$. Now take a sequence $\{a_n\}$ of elements in $\nn^2$ such that the sequence $\{\slp(a_{2n-1})\}$ strictly decreases to $\ell$ and the sequence $\{\slp(a_{2n})\}$ strictly increases to $L$. In addition, assume that
	\[
		\max\{\slp(a_{2n-1}) \mid n \in \nn \} < \min \{\slp(a_{2n}) \mid n \in \nn\}.
	\]
	Now for every $n \in \nn$, we use Theorem~\ref{thm:realization theorem in numerical monoids} to obtain an additive submonoid $H_n$ of $\nn a_n$ and an element $x_n \in H_n$ such that $\mathsf{L}_{H_n}(x_n) = S_n$ (note that $\nn a_n$ contains an isomorphic copy of every numerical monoid). After rescaling each $H_1, H_2, \dots$ (in this order) one can guarantee that
	\begin{equation} \label{eq:rescaling consequence 2}
		\min \{ \norm{a} \mid a \in \mathcal{A}(H_{n+1}) \} > \max \big\{\norm{x_n}, \max \{\norm{a} \mid a \in \mathcal{A}(H_n)\} \big\}
	\end{equation}
	for every $n \in \nn$. Take $H$ to be the smallest additive submonoid of $\nn^2$ containing every monoid $H_n$. Clearly, $H$ is generated by the set $A := \cup_{n \in \nn} \mathcal{A}(H_n)$. Let us prove that each $S_n$ is contained in $\mathcal{L}(H)$.
	\vspace{4pt}
	
	\noindent {\it Claim:} If $a \in \mathcal{A}(H)$ divides $x_n$ in $H$, then $\slp(a) = \slp(x_n)$.
	\vspace{3pt}
	
	\noindent {\it Proof of Claim:} Suppose, by way of contradiction, that there exist $n \in \nn$ and $a \in \mathcal{A}(H)$ such that $a \mid_H x_{n}$ and $\slp(a) \neq \slp(x_n)$. Assume first that $n$ is even, namely, $n = 2k$. Since $a \mid_H x_{2k}$, there exists $b \in H$ such that $a + b = x_{2k}$. Because $a$ and~$b$ are vectors located in the interior of the first quadrant, $x_{2k}$ is the longest diagonal of the lattice parallelogram determined by the vectors $a$ and~$b$. Hence $\norm{a} < \norm{x_{2k}}$. Observe that if $\slp(a) < \slp(x_{2k})$, then we can take $a' \in \mathcal{A}(H)$ satisfying that $a' \mid_H x_{2k}$ and $\slp(a') > \slp(x_{2k})$. Then we can assume without loss of generality that $\slp(a) > \slp(x_{2k})$. This, along with the fact that $x_{2k}$ has an even index, ensures that $a \in \mathcal{A}(H_m)$ for some index $m > 2k$. Now the inequality~({\ref{eq:rescaling consequence 2}}) guarantees that $\norm{a} > \norm{x_{2k}}$, which contradicts the already-established inequality $\norm{a} < \norm{x_{2k}}$. Hence every atom $a$ of $H$ dividing $x_{2k}$ in $H$ must satisfy that $\slp(a) = \slp(x_{2k})$. The case when $n$ is odd can be argued similarly. Thus, the claim follows.
	
	As a direct consequence of the above claim, $\mathsf{L}_{H}(x_n) = \mathsf{L}_{H_n}(x_n) = S_n$ for every $n \in \nn$. Hence $\pf \subseteq \mathcal{L}(H)$, and so $H$ has full system of sets of lengths. Finally, notice that
	\[
		\cone(H)^\bullet := \{x \in \qq^2 \mid x \neq 0 \ \text{ and } \ \ell < \slp(x) < L\},
	\]
	which is an open subset of $\qq^2$. Thus, it follows by Theorem~\ref{thm:primary geometric characterization} that $H$ is primary, which concludes the proof.
\end{proof}

The reader might have noticed that the argument we presented in the proof of Theorem~\ref{thm:GAM with full system of sets of lengths} can be simplified by using only one limit slope instead of two. We record this parallel result in the next proposition for future reference. However, it is not hard to see that the monoid resulting from using only one slope does not have the extra desirable property of being primary.

\begin{prop} \label{prop:a GAM with full system of sets of lengths}
	There exists a monoid $H$ in $\mathcal{C}_2$ having full system of sets of lengths such that $\slp(H)$ has only one limit point.
\end{prop}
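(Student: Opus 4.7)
The plan is to repeat the construction in the proof of Theorem~\ref{thm:GAM with full system of sets of lengths} using a single monotone sequence of atom slopes in place of two interleaved ones. Fix $L \in \qq_{>0}$ and enumerate $\pf = \{S_1, S_2, \ldots\}$. I will choose $\{a_n\} \subset \nn^{2}$ so that $\{\slp(a_n)\}$ is strictly increasing with unique limit point $L$. For each $n$, invoking Theorem~\ref{thm:realization theorem in numerical monoids} on the cyclic monoid $\nn a_n \cong \nn$ yields a submonoid $H_n \subseteq \nn a_n$ and an element $x_n \in H_n$ with $\mathsf{L}_{H_n}(x_n) = S_n$. I then rescale the $H_n$ successively so that the analogue of~(\ref{eq:rescaling consequence 2}),
\[
    \min \{\norm{a} : a \in \mathcal{A}(H_{n+1})\} > \max\big\{\norm{x_n}, \max\{\norm{a} : a \in \mathcal{A}(H_n)\}\big\},
\]
holds for every $n$, and set $H := \big\langle \bigcup_{n \in \nn} \mathcal{A}(H_n) \big\rangle$, which will be the desired monoid.

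The key step is the single-slope analogue of the Claim inside the proof of Theorem~\ref{thm:GAM with full system of sets of lengths}: every atom $a \in \mathcal{A}(H)$ dividing some $x_n$ in $H$ satisfies $\slp(a) = \slp(a_n)$. Write $x_n = a + b$ with $b \in H$ and suppose $\slp(a) \neq \slp(a_n)$. If $\slp(a) > \slp(a_n)$, strict monotonicity of $\{\slp(a_m)\}$ places $a \in \mathcal{A}(H_m)$ for some $m > n$, and the rescaling condition gives $\norm{a} > \norm{x_n}$, contradicting $a \mid_H x_n$. If $\slp(a) < \slp(a_n)$, then $\slp(x_n) = \slp(a_n)$ being a positive convex combination of $\slp(a)$ and $\slp(b)$ forces $\slp(b) > \slp(a_n)$, so some atom $a'$ appearing in a factorization of $b$ has $\slp(a') > \slp(a_n)$, and $a' \mid_H x_n$ reduces this case to the previous one. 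Granting this, $\mathsf{Z}_H(x_n) = \mathsf{Z}_{H_n}(x_n)$, whence $\mathsf{L}_H(x_n) = S_n$ for every $n$; since $H \in \mathcal{C}_2$ is a BF-monoid, this gives $\mathcal{L}(H) = \pf$.

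Finally, the atom slopes of $H$ comprise exactly $\{\slp(a_n) : n \in \nn\}$, whose unique accumulation point in $\rr$ is $L$; this is the sense in which $\slp(H)$ has only one limit point. The main obstacle, relative to Theorem~\ref{thm:GAM with full system of sets of lengths}, is handling the subcase $\slp(a) < \slp(a_n)$ without the parity separation afforded by two sequences; I plan to resolve it via the weighted-average description of the slope of a sum, as sketched above. The remark that follows the proposition will then be immediate: $\cone(H)^\bullet$ contains the boundary ray $\qq_{\ge 0}\, a_1$, so it is not open in $\qq^2$, and Theorem~\ref{thm:primary geometric characterization} precludes $H$ from being primary.
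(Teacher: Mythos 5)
Your proposal is correct and is exactly the argument the paper intends: the published proof of this proposition simply defers to the proof of Theorem~\ref{thm:GAM with full system of sets of lengths}, and your single increasing sequence of slopes, the rescaling condition~(\ref{eq:rescaling consequence 2}), and the reduction of the subcase $\slp(a) < \slp(a_n)$ to the subcase $\slp(a) > \slp(a_n)$ via the weighted-average description of $\slp(a+b)$ all mirror the Claim in that proof. Your closing observations (that the limit-point count should be read on $\slp(\mathcal{A}(H))$, and that $\cone(H)^\bullet$ fails to be open so $H$ is not primary) likewise match the paper's subsequent remarks.
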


\begin{proof}
	It is left to the reader as it follows the same argument as the proof of Theorem~\ref{thm:GAM with full system of sets of lengths}.
\end{proof}

As every submonoid of $\nn$ is isomorphic to a numerical monoid, and the elasticity of a numerical monoid is finite \cite[Theorem 2.1]{CHM06}, no monoid in $\mathcal{C}_1$ can have full system of sets of lengths. However, Theorem~\ref{thm:GAM with full system of sets of lengths} can be used to construct, for each $d \ge 2$, a maximal-rank submonoid of $\nn^d$ having full system of sets of lengths.

\begin{cor} \label{cor:GAMS with full system of sets of lengths}
	For every $d \ge 2$, there exists a monoid in $\mathcal{C}_d$ having full system of sets of lengths.
\end{cor}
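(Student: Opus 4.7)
The plan is to bootstrap from Theorem~\ref{thm:GAM with full system of sets of lengths} by embedding the rank-$2$ monoid it produces into $\nn^d$ and then adjoining $d-2$ independent ``free'' coordinate directions, chosen so as not to interfere with the witnesses to $\pf \subseteq \mathcal{L}(\cdot)$. Concretely, let $H_2$ be the rank-$2$ submonoid of $\nn^2$ furnished by Theorem~\ref{thm:GAM with full system of sets of lengths} (Lemma~\ref{lem:basic isomorphic representation} justifies taking $H_2 \subseteq \nn^2$), and let $\{x_n\}_{n \in \nn}$ be the sequence of elements of $H_2$ constructed there, satisfying $\mathsf{L}_{H_2}(x_n) = S_n$ where $S_1, S_2, \dots$ enumerates $\pf$. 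For $d \ge 2$, identify $\nn^2$ with $\nn^2 \times \{0\}^{d-2} \subseteq \nn^d$ and define
\[
    H_d := \bigl(H_2 \times \{0\}^{d-2}\bigr) \, + \, \nn e_3 + \dots + \nn e_d,
\]
where $e_3, \dots, e_d$ are the last $d-2$ standard basis vectors of $\nn^d$.

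First, I would check that $H_d$ belongs to $\mathcal{C}_d$: it is clearly a submonoid of $\nn^d$, and $\text{gp}(H_d)$ contains the rank-$2$ group $\text{gp}(H_2) \times \{0\}^{d-2}$ together with $\zz e_3, \dots, \zz e_d$, so $\rank(H_d) = d$. Next, since the coordinate functionals $e_3^*, \dots, e_d^*$ separate the generators in the obvious way, an easy verification shows
\[
    \mathcal{A}(H_d) = \bigl(\mathcal{A}(H_2) \times \{0\}^{d-2}\bigr) \, \cup \, \{e_3, \dots, e_d\};
\]
in particular, $H_d$ is atomic (and being a submonoid of $\nn^d$, it is an FF-monoid, hence a BF-monoid by the proposition preceding Theorem~\ref{thm:GAM with full system of sets of lengths}).

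The crux of the argument is that the witnesses $x_n$ still witness $S_n$ inside $H_d$. Set $\tilde x_n := (x_n, 0, \dots, 0) \in H_d$. Any factorization of $\tilde x_n$ in $H_d$ is a sum of atoms with nonnegative coefficients; since the $i$-th coordinate of $\tilde x_n$ is $0$ for $i \in \ldb 3, d\rdb$ while $e_i$ has a positive $i$-th coordinate and every atom in $\mathcal{A}(H_2) \times \{0\}^{d-2}$ has $i$-th coordinate $0$, no factorization of $\tilde x_n$ can involve any of $e_3, \dots, e_d$. Therefore $\mathsf{Z}_{H_d}(\tilde x_n)$ is in length-preserving bijection with $\mathsf{Z}_{H_2}(x_n)$, whence $\mathsf{L}_{H_d}(\tilde x_n) = \mathsf{L}_{H_2}(x_n) = S_n$. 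It follows that $\pf \subseteq \mathcal{L}(H_d)$, and since $H_d$ is a BF-monoid we also have $\mathcal{L}(H_d) \subseteq \pf$, giving $\mathcal{L}(H_d) = \pf$.

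The only step that requires any care is the atom/factorization analysis in the middle paragraph, but the coordinate-projection argument trivializes it: the $(d-2)$-dimensional complementary direction sees the adjoined atoms $e_i$ but not the embedded copy of $H_2$, so the sets of lengths of the old witnesses transfer verbatim to $H_d$.
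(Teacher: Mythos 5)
Your proposal is correct and follows essentially the same route as the paper: embed the rank-$2$ monoid of Theorem~\ref{thm:GAM with full system of sets of lengths} into $\nn^d$, adjoin $d-2$ further generators to raise the rank to $d$, and observe that the embedded copy is divisor-closed so the sets of lengths of the witnesses $x_n$ are unchanged. The only difference is cosmetic: the paper adjoins arbitrary vectors $v_3,\dots,v_d$ subject to the rank condition and cites divisor-closedness, whereas you take the specific choice $e_3,\dots,e_d$ and verify the same fact explicitly by projecting onto the last $d-2$ coordinates.
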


\begin{proof}
	The case $d=2$ is Theorem~\ref{thm:GAM with full system of sets of lengths}. Suppose, therefore, that $d \ge 3$. By Theorem~\ref{thm:GAM with full system of sets of lengths} there exists a submonoid $H'$ of $\nn^d$ with $\rank(H') = 2$ such that $p_i(H') = \{0\}$ for every $i \in \ldb 3,d \rdb$. Take vectors $v_3, \dots, v_d \in \nn^d$ such that the rank of the submonoid
	\[
		H := \langle H' \cup \{v_3, \dots, v_d\} \rangle
	\]
	of $\nn^d$ is $d$. Since $v_i \notin H'$ for each $i \in \ldb 3,d \rdb$, it follows that $H'$ is a divisor-closed submonoid of $H$. Therefore $\mathsf{L}_{H'}(x) = \mathsf{L}_H(x)$ for all $x \in H'$. As a result, $\mathcal{L}(H') = \pf$ implies that $\mathcal{L}(H) = \pf$. Thus, $H$ has full system of sets of lengths.
\end{proof}
\medskip

We would like to remark that the submonoid $H$ of $\nn^d$ (for $d \ge 3$) constructed in Corollary~\ref{cor:GAMS with full system of sets of lengths} is not primary. Notice, for instance, that $H \cap \{x \in \qq^d \mid p_d(x) = 0 \}$ is a nonempty proper divisor-closed submonoid of $H$. However, the reader is invited to prove the following conjecture, which we believe to be true.

\begin{conj}
	For every dimension $d \ge 3$, there exists a primary monoid in $\mathcal{C}_d$ having full system of sets of lengths.
\end{conj}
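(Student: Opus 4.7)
The plan is to extend the construction from the proof of Theorem~\ref{thm:GAM with full system of sets of lengths} to dimension $d \ge 3$, replacing the two limiting slopes used in the 2D setting by ``supporting hyperplane'' arguments inside an open pointed rational cone in $\qq^d$. Fix such a cone, for instance $C := \{x \in \qq^d : x_1 > 0 \ \text{and} \ \ell_i < x_i/x_1 < L_i \ \text{for} \ i = 2, \dots, d\}$ for suitable $\ell_i, L_i \in \qq_{>0}$, and enumerate $\pf = \{S_n\}_{n \ge 1}$. The goal is to build a submonoid $H$ of $\nn^d$ with $\cone(H)^\bullet = C$ (giving primality by Theorem~\ref{thm:primary geometric characterization}) and distinguished elements $x_n \in H$ satisfying $\mathsf{L}_H(x_n) = S_n$ (yielding $\pf \subseteq \mathcal{L}(H)$, hence full system of sets of lengths since $H$ will be an FF-monoid).

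Inductively choose vectors $a_n \in C \cap \nn^d$ so that for every $n$: (i) $a_n \notin \cone(a_1, \dots, a_{n-1})$, so that $a_n$ adds a new extreme ray to the growing polyhedral cone; and (ii) $\bigcup_n \cone(a_1, \dots, a_n) = C \cup \{0\}$. Condition (ii) is arranged by enumerating the rational rays of $C$ and, at step $n$, taking $a_n$ to be the first representative in the enumeration not already in $\cone(a_1, \dots, a_{n-1})$; such a choice exists at every step because $C$ is open while $\cone(a_1, \dots, a_{n-1})$ is a proper rational polyhedral subcone of $C \cup \{0\}$, and a standard diagonal argument shows every rational ray of $C$ is eventually absorbed, whence every point of $C$ lies in $\cone(a_1, \dots, a_n)$ for some $n$ (use that $d$ nearby rational rays span an open subcone of $C$). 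Condition~(i) alone implies a separating-functional lemma: there exists $u_n \in \qq^d$ with $\langle a_n, u_n \rangle = 0$ and $\langle a_m, u_n \rangle < 0$ for every $m < n$. Indeed, the open polyhedral set $U_n := \{u \in \qq^d : \langle a_m, u \rangle < 0 \ \text{for all} \ m < n\}$ is nonempty because $\cone(a_1, \dots, a_{n-1})$ is pointed, and by convex duality $U_n$ can neither be contained in $\{u : \langle a_n, u \rangle \ge 0\}$ (otherwise $a_n \in \cone(a_1, \dots, a_{n-1})$, contradicting~(i)) nor in $\{u : \langle a_n, u \rangle \le 0\}$ (otherwise $-a_n$ would lie in the same cone, which is impossible since $-a_n$ has negative coordinates); an intermediate-value argument along a segment inside the convex set $U_n$ then produces the required $u_n$. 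Now, for every $n$, Theorem~\ref{thm:realization theorem in numerical monoids} supplies an additive submonoid $H_n \subseteq \nn a_n$ and $x_n \in H_n$ with $\mathsf{L}_{H_n}(x_n) = S_n$; rescale each $H_n$ iteratively as in Theorem~\ref{thm:GAM with full system of sets of lengths} so that
\[
	\min\{\norm{a} : a \in \mathcal{A}(H_{n+1})\} > \max\bigl\{\norm{x_n}, \max\{\norm{a} : a \in \mathcal{A}(H_n)\}\bigr\},
\]
and set $H := \langle \bigcup_{n \in \nn} \mathcal{A}(H_n) \rangle \subseteq \nn^d$. By~(ii), $\cone(H)^\bullet = C$ is open in $\qq^d = \qq \otimes_\zz \text{gp}(H)$, so Theorem~\ref{thm:primary geometric characterization} yields that $H$ is primary.

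The decisive step is the analog of the claim in Theorem~\ref{thm:GAM with full system of sets of lengths}: every atom $a \in \mathcal{A}(H)$ dividing $x_n$ in $H$ lies in $\mathcal{A}(H_n)$. Given such $a$, write $x_n = a + b$ with $b \in H$; since $b \in \nn^d$, one has $\norm{a} \le \norm{x_n}$, so by the rescaling $a \in \mathcal{A}(H_m)$ for some $m \le n$. Suppose, for a contradiction, that $m < n$. Then $\langle a, u_n \rangle < 0$ (as $a$ is a positive multiple of $a_m$) and $\langle x_n, u_n \rangle = 0$ (as $x_n \in \nn a_n$), so $\langle b, u_n \rangle = -\langle a, u_n \rangle > 0$. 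On the other hand, any decomposition $b = \sum_j c_j a'_j$ with $a'_j \in \mathcal{A}(H)$ satisfies $\norm{a'_j} \le \norm{b} \le \norm{x_n}$, so each $a'_j \in \mathcal{A}(H_{m'_j})$ with $m'_j \le n$, and $\langle a'_j, u_n \rangle \le 0$ by the defining property of $u_n$; hence $\langle b, u_n \rangle \le 0$, a contradiction. Therefore $m = n$, whence $\mathsf{L}_H(x_n) = \mathsf{L}_{H_n}(x_n) = S_n$, which combined with $\mathcal{L}(H) \subseteq \pf$ (since $H$ is an FF-monoid) gives $\mathcal{L}(H) = \pf$. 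The main obstacle I anticipate is not the bookkeeping behind~(ii) but the separating-functional lemma: rigorously deriving the existence of $u_n$ from~(i) alone requires a careful duality argument in $\qq^d$, and it is this step that carries the central geometric content replacing the one-dimensional slope ordering used in the 2D proof.
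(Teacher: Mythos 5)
The statement you were asked to prove is stated in the paper only as a conjecture --- the author explicitly leaves it open and ``invites the reader'' to prove it --- so there is no proof in the paper to compare against. Your argument is a genuine attempt at resolving it, and as far as I can check it works: the separating-functional $u_n$ is exactly the right replacement for the slope ordering of the two-dimensional proof (where, implicitly, $u_n$ is the normal to the line $\rr a_n$ pointing away from all previously chosen rays), and the key claim --- that any atom of $H$ dividing $x_n$ must lie in $\mathcal{A}(H_n)$ --- goes through: the norm bound forces such an atom into $\mathcal{A}(H_m)$ with $m \le n$, and if $m < n$ the sign of $\langle \cdot, u_n\rangle$ on the complementary divisor yields the contradiction you describe. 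The greedy absorption of rational rays does give $\cone(H)^\bullet = C$ (every point of $C \subseteq \qq^d$ lies on a rational ray, so the extra remark about $d$ nearby rays is not needed), and primality then follows from Theorem~\ref{thm:primary geometric characterization}.

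Two small repairs. First, in the duality step you have the two cases swapped: if $\langle a_n, u\rangle \ge 0$ on all of $U_n$, then by density of $U_n$ in the polar cone $K^\circ$ of $K := \cone(a_1,\dots,a_{n-1})$ and the bipolar theorem one gets $-a_n \in K$ (impossible since $-a_n$ has a negative coordinate), whereas $\langle a_n, u\rangle \le 0$ on $U_n$ gives $a_n \in K$, contradicting~(i); the conclusion that $U_n$ meets both open half-spaces, and hence contains a rational $u_n$ with $\langle a_n, u_n\rangle = 0$, is unaffected. Second, the claim as stated only yields $\mathsf{L}_H(x_n) \subseteq \mathsf{L}_{H_n}(x_n)$; for the reverse inclusion you also need $\mathcal{A}(H_n) \subseteq \mathcal{A}(H)$, so that every $H_n$-factorization of $x_n$ is an $H$-factorization of the same length. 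This follows from the same mechanism: if $g \in \mathcal{A}(H_n)$ decomposed in $H$, every atom appearing would have norm at most $\norm{g}$, hence lie in some $\mathcal{A}(H_m)$ with $m \le n$, and $\langle g, u_n\rangle = 0$ forces all of them to lie in $\mathcal{A}(H_n)$, contradicting that $g$ is an atom of $H_n$. (The paper's own proof of Theorem~\ref{thm:GAM with full system of sets of lengths} elides this same point, so you are in good company, but it is worth writing down.) With these adjustments I believe your construction settles the conjecture.
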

\medskip

We conclude this section answering the characterization problem for sets of lengths in each class $\mathcal{C}_d$. Let $\phi \colon H \to H'$ be a monoid isomorphism, where $H$ and $H'$ are monoids in $\mathcal{C}$. Then $\phi$ extends to a group isomorphism $\text{gp}(H) \to \text{gp}(H')$. In particular, if two monoids in $\mathcal{C}$ are isomorphic, they have the same rank. Therefore Corollary~\ref{cor:GAMS with full system of sets of lengths} immediately implies that the system of sets of lengths does not characterize monoids in $\mathcal{C}_{\ge 2}$. On the other hand, it was proved in~\cite{ACHP07} that the system of sets of lengths does not characterize monoids in $\mathcal{C}_1$. Now we extend these two observations.

\begin{prop}
	For any $d \ge 2$, the system of sets of lengths does not characterize monoids inside the class $\mathcal{C}_d$.
\end{prop}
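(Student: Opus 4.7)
My plan is to reduce the statement for each $\mathcal{C}_d$ to the known case of $\mathcal{C}_1$ by a direct-product lift. Start with non-isomorphic numerical monoids $N_1, N_2 \in \mathcal{C}_1$ satisfying $\mathcal{L}(N_1) = \mathcal{L}(N_2)$, as provided by~\cite{ACHP07}. Both $N_i$ are necessarily distinct from $\nn$: the free monoid $\nn$ is the only half-factorial numerical monoid (any proper numerical monoid has two distinct atoms $a < b$, so $ab$ admits factorizations of lengths $a$ and $b$), and $\mathcal{L}(\nn) = \{\{k\} \mid k \in \nn\}$ is achieved only by half-factorial monoids. For each $d \ge 2$, set $H_i := N_i \oplus \nn^{d-1} \subseteq \nn^d$, a rank-$d$ member of $\mathcal{C}_d$.

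First I would verify $\mathcal{L}(H_1) = \mathcal{L}(H_2)$. The atoms of $H_i$ split as $\mathcal{A}(N_i) \times \{0\}$ together with the standard basis vectors $e_2, \dots, e_d$ of the $\nn^{d-1}$ factor. Because $\nn^{d-1}$ is free, every $y \in \nn^{d-1}$ has a unique factorization of length $|y| := \sum_j y_j$. Hence factorizations of $(x, y) \in H_i$ correspond bijectively to factorizations of $x$ in $N_i$, with length shifted by $|y|$, and so $\mathcal{L}(H_i) = \{L + k \mid L \in \mathcal{L}(N_i),\, k \in \nn\}$, which depends only on $\mathcal{L}(N_i)$.

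The crux is showing $H_1 \not\cong H_2$, and for this I would extract the isomorphism class of $N_i$ from the collection of nontrivial primary divisor-closed submonoids of $H_i$. If $S$ is divisor-closed in $H_i$, then for every $(x, y) \in S$ both $(x, 0)$ and $(0, y)$ divide $(x, y)$ in $H_i$, hence lie in $S$; this forces $S = S_1 \oplus S_2$ for divisor-closed $S_1 \subseteq N_i$ and $S_2 \subseteq \nn^{d-1}$. Since $N_i$ is primary, $S_1 \in \{\{0\}, N_i\}$, while $S_2 = \nn^I$ for some $I \subseteq \ldb 1, d-1 \rdb$. Moreover, a direct product $A \oplus B$ of two nontrivial monoids cannot be primary, since $A \oplus \{0\}$ is then a proper nontrivial divisor-closed submonoid. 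It follows that the nontrivial primary divisor-closed submonoids of $H_i$ are exactly $N_i \oplus \{0\} \cong N_i$ together with the $d-1$ submonoids $\{0\} \oplus \nn e_j \cong \nn$.

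Since $N_i \not\cong \nn$, the submonoid $N_i \oplus \{0\}$ is the \emph{unique} nontrivial primary divisor-closed submonoid of $H_i$ whose isomorphism type differs from $\nn$. Any isomorphism $H_1 \to H_2$ must preserve this distinguished submonoid up to isomorphism, forcing $N_1 \cong N_2$ and contradicting our choice. The main obstacle is setting up the product decomposition of divisor-closed submonoids cleanly, but the identity $(x, y) = (x, 0) + (0, y)$ with both summands dividing $(x, y)$ in $H_i$ makes this step routine.
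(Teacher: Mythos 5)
Your proof is correct, but it takes a genuinely different route from the paper. The paper splits into two cases: for $d=2$ it takes the two monoids with full system of sets of lengths built in Theorem~\ref{thm:GAM with full system of sets of lengths} and Proposition~\ref{prop:a GAM with full system of sets of lengths}, and distinguishes them by a topological invariant (the number of limit points of $\slp(\mathcal{A}(H))$, preserved because an isomorphism extends to a continuous $\qq$-linear map); for $d>2$ it distinguishes two variants of the Corollary~\ref{cor:GAMS with full system of sets of lengths} construction by convex-geometric data on the extreme rays of their cones. You instead reduce uniformly to the known $\mathcal{C}_1$ case of~\cite{ACHP07} via the direct sum $H_i = N_i \oplus \nn^{d-1}$, and distinguish $H_1$ from $H_2$ by a purely algebraic invariant: the isomorphism types of the nontrivial primary divisor-closed submonoids. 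All the steps check out — the length formula $\mathsf{L}_{H_i}((x,y)) = \mathsf{L}_{N_i}(x) + |y|$ is right, the decomposition $S = S_1 \oplus S_2$ of a divisor-closed submonoid follows from $(x,0), (0,y) \mid_{H_i} (x,y)$ exactly as you say, and the exclusion of $N_i \cong \nn$ is handled. What each approach buys: yours is shorter, avoids any topology or convex geometry, and works identically for every $d \ge 2$; the paper's is longer but its witnesses both have the \emph{full} system of sets of lengths $\pf$, so it shows the stronger fact that even the extremal system of sets of lengths fails to pin down the monoid, whereas your examples have the comparatively small systems inherited from numerical monoids. Both establish the proposition as stated.
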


\begin{proof}
	First, suppose that $d=2$. Take $H, H' \in \mathcal{C}_d$, and let $\phi \colon H \to H'$ be a monoid isomorphism. Then $\phi$ extends to a group isomorphism $\text{gp}(H) \to \text{gp}(H')$ and, since $\qq$ is a flat $\zz$-module, $\phi$ also extends to an isomorphism
	\[
		\bar{\phi} \colon \qq \otimes_\zz \text{gp}(H) \to \qq \otimes_\zz \text{gp}(H')
	\]
	of $\qq$-spaces. As $\bar{\phi}$ is a linear transformation, it must be continuous. Thus, if the monoids $H$ and $H'$ are isomorphic, then $\slp(\mathcal{A}(H))$ and $\slp(\mathcal{A}(H'))$ have the same number of limit points. Now if $H$ and $H'$ are the monoids in $\mathcal{C}_2$ constructed in Proposition~\ref{prop:a GAM with full system of sets of lengths} and in the proof of Theorem~\ref{thm:GAM with full system of sets of lengths}, respectively, then $\slp(\mathcal{A}(H)) \subset \rr$ has one limit point and $\slp(\mathcal{A}(H')) \subset \rr$ has two limit points. Hence $\mathcal{L}(H) = \pf = \mathcal{L}(H')$, but $H$ and $H'$ are not isomorphic.
	
	Suppose, on the other hand, that $d > 2$. Notice that the monoid $H$ of $\mathcal{C}_d$ constructed in Corollary~\ref{cor:GAMS with full system of sets of lengths} satisfies that $\cone(H)$ is polyhedral. This is because there exists one supporting plane of $\cone(H)$ containing all but finitely many elements of $\mathcal{A}(H)$. Slightly modifying the proof of Corollary~\ref{cor:GAMS with full system of sets of lengths}, we can construct a monoid $H'$ in $\mathcal{C}_d$ with one of its one-dimensional extreme rays containing two atoms. As an isomorphism $\phi \colon H \to H'$ would send atoms to atoms and its $\qq$-linear extension $\bar{\phi} \colon \qq \otimes_\zz \text{gp}(H) \to \qq \otimes_\zz \text{gp}(H')$ would send one-dimensional faces of $\cone(H)$ to one-dimensional faces of $\cone(H')$, such isomorphism $\phi$ cannot exist. Hence $H$ and $H'$ are not isomorphic monoids even though $\mathcal{L}(H) = \pf = \mathcal{L}(H')$.
\end{proof}
\bigskip

\section{Rationality of the Elasticity}
\label{sec:BF monoid with full elasticity}

We now turn our attention to the elasticity of monoids in $\mathcal{C}$. The following question was asked by G. Lettl and S. Tringali, and then it was posed in~\cite{sT17}.

\begin{question} \label{quest:rational elasticity}
	 Is always the elasticity of a submonoid of a free commutative monoid of finite rank either rational or infinite?
\end{question}
	
	Clearly, the submonoids of free commutative monoids of finite rank are precisely those in $\mathcal{C}$. In Theorem~\ref{thm:elasticity in dimension two} and Theorem~\ref{thm:elasticity of polyhedral GAMS}, we shall provide two positive partial answers to Question~\ref{quest:rational elasticity}.
	
	Before delving into the actual question, let us extend the notion of the set of lengths and the elasticity for submonoids $H$ of $\nn$ so they are both defined in terms of any fixed finite generating set of $H$ (not necessarily $\mathcal{A}(H)$). Similar generalizations of other arithmetic invariants have been useful in the past to study aspects of the non-unique factorization theory of certain classes of monoids, including arithmetical congruence monoids~\cite{BCS08}. In particular, the generalized set of lengths was first used in the context of numerical monoids in~\cite{CDHK10}, where a similar relaxation of the set of distances was studied.

\begin{definition}
	Let $k \in \nn$ and $n_1, \dots, n_k \in \nn^\bullet$. Then for any nonzero $x \in \langle n_1, \dots, n_k \rangle$, we define the \emph{generalized set of lengths} of $x$ with respect to the distinguished generators $n_1, \dots, n_k$ to be
	\[
		\mathsf{L}_g(x) = \bigg\{c_1 + \dots + c_k \ \bigg{|} \ c_1, \dots, c_k \in \nn \ \text{ and } \ \sum_{i=1}^k c_i n_i = x \bigg\} \subset \nn^\bullet.
	\]
	Similarly, the \emph{generalized elasticity} of $x$ with respect to $n_1, \dots, n_k$ is defined to be
	\[
		\rho_g(x) = \frac{\max \mathsf{L}_g(x)}{\min \mathsf{L}_g(x)}.
	\]
\end{definition}

\begin{lemma} \label{lem:set of pseudo-elasticity}
	For $k \in \zz_{\ge 2}$, take $n_1, \dots, n_k \in \nn$ such that $n_1 < \dots < n_k$. If the set
	\[
		\big\{ \rho_g(x) \mid x \in \langle n_1, \dots, n_k \rangle^\bullet \big\}
	\]
	has a limit point, then it must be $n_k/n_1$.
\end{lemma}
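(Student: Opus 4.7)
The plan is to prove the strictly stronger pointwise asymptotic $\rho_g(x) \to n_k/n_1$ as $x$ tends to infinity in the monoid $\langle n_1, \dots, n_k\rangle^\bullet$; this immediately forces any limit point of the set of values to equal $n_k/n_1$.

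First I would record the uniform upper bound. For any factorization $x = \sum_{i=1}^k c_i n_i$ of length $L = \sum c_i$, the inequalities $n_1 \le n_i \le n_k$ yield $L n_1 \le x \le L n_k$, so every element of $\mathsf{L}_g(x)$ lies in $[x/n_k, x/n_1]$. Hence $\rho_g(x) \le n_k/n_1$ for every nonzero $x$ in the monoid.

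For the asymptotic lower bound I first reduce to the coprime case. Letting $d = \gcd(n_1, \dots, n_k)$ and $n_i' = n_i/d$, the bijection $y \leftrightarrow dy$ between $\langle n_1', \dots, n_k'\rangle$ and $\langle n_1, \dots, n_k\rangle$ identifies their generalized sets of lengths, preserves $\rho_g$, and satisfies $n_k'/n_1' = n_k/n_1$; so I may assume $\gcd(n_1, \dots, n_k) = 1$. Under this hypothesis the submonoid of $\zz/n_1\zz$ generated by the residues of $n_2, \dots, n_k$ is a subgroup (every submonoid of a finite group is one), and by coprimality it equals all of $\zz/n_1\zz$. Consequently, for each $j \in \ldb 0, n_1 - 1 \rdb$ I can fix non-negative integers $c_2^{(j)}, \dots, c_k^{(j)}$ with $y_j := \sum_{i \ge 2} c_i^{(j)} n_i \equiv j \pmod{n_1}$. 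For $x$ with $x \equiv j \pmod{n_1}$ and $x \ge y_j$, writing $x = y_j + (x - y_j)$ and using $(x - y_j)/n_1$ copies of $n_1$ yields a factorization of $x$ of length
\[
	\frac{x - y_j}{n_1} + \sum_{i \ge 2} c_i^{(j)} = \frac{x}{n_1} - \frac{1}{n_1}\sum_{i \ge 2} c_i^{(j)}(n_i - n_1),
\]
so $\max \mathsf{L}_g(x) \ge x/n_1 - C_1$ for a constant $C_1$ depending only on $n_1, \dots, n_k$. A fully symmetric argument, with $n_k$ playing the role of $n_1$ and residues taken mod $n_k$, gives $\min \mathsf{L}_g(x) \le x/n_k + C_2$ for $x$ sufficiently large. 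Combining the two bounds,
\[
	\rho_g(x) \ge \frac{x/n_1 - C_1}{x/n_k + C_2} \longrightarrow \frac{n_k}{n_1} \quad \text{as } x \to \infty,
\]
which, together with the upper bound, forces $\rho_g(x) \to n_k/n_1$.

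To finish, suppose $r^*$ is a limit point of $\{\rho_g(x) : x \in \langle n_1, \dots, n_k\rangle^\bullet\}$. Then $r^* \le n_k/n_1$ by the upper bound. If $r^* < n_k/n_1$, pick $\epsilon > 0$ with $r^* + \epsilon < n_k/n_1 - \epsilon$; by the asymptotic there exists $N$ such that $\rho_g(x) > n_k/n_1 - \epsilon > r^* + \epsilon$ for every $x \ge N$, so any $\rho_g$-value within $\epsilon$ of $r^*$ must come from one of the finitely many $x < N$. This contradicts $r^*$ being a limit point, and hence $r^* = n_k/n_1$. The genuinely nontrivial step is the asymptotic lower bound on $\max \mathsf{L}_g(x)$ (together with its dual for $\min \mathsf{L}_g(x)$); it rests on the coprimality reduction and on the combinatorial fact that every residue class mod $n_1$ is representable using only the larger generators $n_2, \dots, n_k$.
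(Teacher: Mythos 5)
Your proof is correct, and it actually establishes the stronger fact that $\rho_g(x) \to n_k/n_1$ as $x \to \infty$ in $\langle n_1,\dots,n_k\rangle^\bullet$, from which the statement about limit points follows at once. The route is genuinely different from the paper's. The paper fixes $N$ so that every nonzero $x$ in the monoid decomposes as $x = r + m n_1 n_k$ with $r$ ranging over the finite set $\ldb 1,N \rdb \cap \langle n_1,\dots,n_k\rangle$, asserts the exact formulas $\max \mathsf{L}_g(x) = \max \mathsf{L}_g(r) + m n_k$ and $\min \mathsf{L}_g(x) = \min \mathsf{L}_g(r) + m n_1$ (an exchange argument: a longest representation converts the block $m n_1 n_k$ into $m n_k$ copies of $n_1$, a shortest one into $m n_1$ copies of $n_k$), and concludes that the value set is contained in the doubly indexed family $\big\{ \big(n_k + \tfrac{1}{m}\max \mathsf{L}_g(r)\big) / \big(n_1 + \tfrac{1}{m}\min \mathsf{L}_g(r)\big) \big\}$, whose only accumulation point is $n_k/n_1$. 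You instead prove the uniform bound $\rho_g(x) \le n_k/n_1$ directly from $x/n_k \le L \le x/n_1$, reduce to $\gcd(n_1,\dots,n_k)=1$, and use the representability of every residue class modulo $n_1$ (resp.\ modulo $n_k$) by the remaining generators to construct explicit near-extremal factorizations, giving $\max \mathsf{L}_g(x) \ge x/n_1 - C_1$ and $\min \mathsf{L}_g(x) \le x/n_k + C_2$. Your version needs only one-sided inequalities rather than the exact extremal formulas (which are the one delicate point of the paper's argument), at the modest cost of the gcd reduction and the residue-class observation; the paper's version yields a more explicit description of the value set itself. Both arguments rest on the same underlying heuristic that the longest factorizations use essentially only $n_1$ and the shortest essentially only $n_k$.
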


\begin{proof}
	Set $H = \langle n_1, \dots, n_k \rangle$. It is not hard to see that we can take $N \in \nn$ large enough such that for every $x \in H^\bullet$ there exists $r_x \in \ldb 1,N \rdb \cap H$ satisfying that $x = r_x + m_x n_1 n_k$ for some $m_x \in \nn$. Now fix $x_0 \in H$ with $x_0 > N$, and write $x_0 = r + m n_1 n_k$ for $r \in \ldb 1,N \rdb \cap H$ and $m \in \nn$. As $x_0 > N$, it follows that $m \ge 1$. Therefore any formal sum of copies of $n_1, \dots, n_k$ adding to $x_0$ and maximizing the number of distinguished generators (counting repetitions) must contain at least $m n_k$ copies of $n_1$ and so
	\begin{equation} \label{eq:max generalized length}
		\max \mathsf{L}_g(x_0) = \max \mathsf{L}_g(r + m n_1 n_k) = \max \mathsf{L}_g(r) + m n_k.
	\end{equation}
	Similarly, any formal sum of copies of $n_1, \dots, n_k$ adding to $x_0$ and minimizing the number of distinguished generators must contain at least $m n_1$ copies of $n_k$ and so
	\begin{equation} \label{eq:min generalized length}
		\min \mathsf{L}_g(x_0) = \min \mathsf{L}_g(r + m n_1 n_k) = \min \mathsf{L}_g(r) + m n_1.
	\end{equation}
	Using~(\ref{eq:max generalized length}) and~(\ref{eq:min generalized length}), we obtain that
	\[
		\rho_g(x_0)  = \frac{\max \mathsf{L}_g(r) + m n_k}{\min \mathsf{L}_g(r) + m n_1}.
	\]
	As a result,
	\[
		\big\{ \rho_g(x) \mid x \in H \setminus \ldb 1,N \rdb \big\} \subseteq \bigg\{ \frac{n_k + \frac 1m \max \mathsf{L}_g(r)}{ n_1 + \frac 1m \min \mathsf{L}_g(r)} \ \bigg{|} \ r \in \ldb 1,N \rdb \cap H \ \text{and} \ m \in \zz_{\ge 1} \bigg\}. 
	\]
	From the above inclusion of sets, it immediately follows that $\big\{\rho_g(x) \mid x \in H^\bullet \big\}$ can have at most one limit point, namely $n_k/n_1$.
\end{proof}
\medskip

\begin{remark}
	Lemma~\ref{lem:set of pseudo-elasticity} is essentially a generalization of~\cite[Corollary~2.3]{CHM06}, which states that the only limit point of the set of elasticities of a numerical monoid minimally generated by the elements $a_1 < a_2 < \dots < a_k$ (for $k \ge 2$) is $a_k/a_1$.
\end{remark}

\begin{remark}
	Another result similar to Lemma~\ref{lem:set of pseudo-elasticity} was previously established in~\cite[Corollary~4.5]{BOP17}. We decided to reprove it here not only for the sake of completeness, but also because we need to work in a more general context, meaning that our distinguished set of generators $\{n_1, \dots, n_k\}$ is not necessarily minimal, and $n_1, \dots, n_k$ are not necessarily relatively prime.
\end{remark}

For a nonzero vector $a \in \rr^d$, we let $\pp_{a} \colon \rr^d \to \rr a$ be the linear transformation that projects a vector of $\rr^d$ onto the one-dimensional space $\rr a$. Also, for each $j \in \ldb 1,d \rdb$, we let $p_j(x)$ denote the $j$-th component of $x$. In particular, for nonzero vectors $a, b \in \nn^2$, we have that
\[
	\frac{\langle a, b \rangle}{\norm{a}} = \frac{p_1(a) p_1(b) + p_2(a) p_2(b)}{ \norm{a}}
\]
is the Fourier coefficient of $b$ with respect to the unit vector $a/ \norm{a}$. Therefore the projection of $b$ on $a$ is given by
\begin{equation} \label{eq:Fourier coefficient}
\pp_a(b) = \frac{p_1(a) p_1(b) + p_2(a) p_2(b)}{\norm{a}^2} \, a.
\end{equation}

\begin{lemma} \label{lem:trigonometric identity}
	Let $a,x,y \in \nn^2$ such that $\slp(x) < \slp(a) < \slp(y)$. Also, let $\alpha$ be the acute angle between $x$ and $a$, and let $\beta$ be the acute angle between $a$ and $y$. Then the following identity holds:
	\[
		\big(\norm{a}  \norm{y} \sin \beta \big) x + \big(\norm{a} \norm{x} \sin \alpha \big) y = \norm{x} \norm{y} \sin (\alpha + \beta) a.
	\]
	Moreover, the coefficients of $x$, $y$, and $a$ in the above identity are nonnegative integers.
\end{lemma}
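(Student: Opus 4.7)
The plan is to interpret each sine as a (signed) two–dimensional cross product and then derive the identity from the fact that $a$ lies in the rational cone generated by $x$ and $y$. Throughout, for $u,v \in \rr^2$ set $u \times v := p_1(u)p_2(v) - p_2(u)p_1(v)$, so that $u \times v = \norm{u}\norm{v}\sin(\theta_v - \theta_u)$, where $\theta_u$ denotes the angle from the positive horizontal axis to $u$. Since $x,a,y \in \nn^2$ have positive coordinates and $\slp(x) < \slp(a) < \slp(y)$, one has $0 \le \theta_x < \theta_a < \theta_y < \pi/2$, so $\alpha = \theta_a - \theta_x$ and $\beta = \theta_y - \theta_a$ are acute angles with $\alpha + \beta < \pi$. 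In particular,
\[
    x \times a = \norm{x}\norm{a}\sin\alpha, \quad a \times y = \norm{a}\norm{y}\sin\beta, \quad x \times y = \norm{x}\norm{y}\sin(\alpha+\beta),
\]
and all three of these quantities are strictly positive.

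Next, I would use that $a$ sits between $x$ and $y$ angularly to write $a = c_1 x + c_2 y$ for some real $c_1, c_2 \ge 0$; the existence of such an expression follows because $(x,y)$ is a basis of $\rr^2$ and $a$ is in the convex cone they generate. Taking the cross product of this relation with $y$ (respectively with $x$) kills one term and gives
\[
    c_1 = \frac{a \times y}{x \times y}, \qquad c_2 = \frac{x \times a}{x \times y}.
\]
Substituting the trigonometric expressions above and clearing the common denominator $\norm{x}\norm{y}\sin(\alpha+\beta)$ yields exactly the desired identity
\[
    \bigl(\norm{a}\norm{y}\sin\beta\bigr)x + \bigl(\norm{a}\norm{x}\sin\alpha\bigr)y = \norm{x}\norm{y}\sin(\alpha+\beta)\,a.
\]

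For the moreover clause, observe that since $x,y,a \in \nn^2 \subset \zz^2$, the three cross products $x \times a$, $a \times y$, $x \times y$ are integers; as argued above they are also nonnegative. Hence the coefficients $\norm{a}\norm{y}\sin\beta$, $\norm{a}\norm{x}\sin\alpha$, and $\norm{x}\norm{y}\sin(\alpha+\beta)$ of $x$, $y$, and $a$, respectively, are nonnegative integers. There is no real obstacle here; the only mild care needed is to verify the sign conventions so that the cross products come out nonnegative, which is guaranteed by the strict inequalities $\slp(x) < \slp(a) < \slp(y)$ together with $x,y,a$ lying in the closed first quadrant.
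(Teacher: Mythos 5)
Your proof is correct, and it reaches the same decomposition as the paper but by a different computational route. The paper starts from the combination $z = (\norm{y}\sin\beta)\,x + (\norm{x}\sin\alpha)\,y$, shows $z$ is collinear with $a$ by projecting onto $a^\perp$, and then computes $\norm{z}$ by projecting onto $a$, which requires invoking the angle-addition formula $\sin\alpha\cos\beta + \cos\alpha\sin\beta = \sin(\alpha+\beta)$; for the integrality claim it then remarks that the three coefficients are areas of lattice parallelograms. You instead write $a = c_1 x + c_2 y$ and solve for $c_1, c_2$ by Cramer's rule via the $2\times 2$ determinant $u\times v$, so the factor $\sin(\alpha+\beta)$ appears automatically as $x\times y = \norm{x}\norm{y}\sin(\theta_y-\theta_x)$ with no trigonometric identity needed, and the integrality of all three coefficients is immediate because they are determinants of integer vectors. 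This buys a slightly shorter and more self-contained argument, particularly for the ``moreover'' clause, at the cost of the geometric picture the projection argument provides. Two trivial points of hygiene: elements of $\nn^2$ need not have strictly positive coordinates, and $\slp(y)=\infty$ (i.e.\ $\theta_y = \pi/2$) is permitted, so your chain of inequalities should read $0 \le \theta_x < \theta_a < \theta_y \le \pi/2$; neither affects the argument, since all that is used is $\alpha,\beta>0$ and $\alpha+\beta<\pi$, which keeps every sine nonnegative.
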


\begin{proof}
	Set $a^\perp := (-p_2(a), p_1(a))$, and note that
	\[
		\pp_{a^\perp}(x) = -\norm{x} \sin \alpha \frac{a^\perp}{\norm{a^\perp}} \ \ \text{and} \ \ \pp_{a^\perp}(y) = \norm{y} \sin \beta \frac{a^\perp}{ \norm{a^\perp}}.
	\] Taking $z = ( \norm{y} \sin \beta) x + ( \norm{x} \sin \alpha) y$, we obtain that $\pp_{a^\perp}(z) = 0$, which implies that~$z$ and $a$ are colinear, i.e., $\pp_a(z) = z$. Since
	\[
		\pp_a(x) = \norm{x} \cos \alpha \frac{a}{\norm{a}} \ \ \text{and} \ \  \pp_a(y) =  \norm{y} \cos \beta \frac{a}{\norm{a}},
	\]
	it follows that
	\begin{align*}
		z &= \pp_a(z) =  (\norm{y} \sin \beta) \pp_a(x) + ( \norm{x} \sin \alpha) \pp_a(y) \\
		&= \norm{x} \norm{y} \big(\sin \alpha \cos \beta + \sin \beta \cos \alpha) \frac{a}{\norm{a}} \\
		&= \norm{x} \norm{y}  \sin(\alpha + \beta) \frac{a}{ \norm{a} }. 
	\end{align*}
	Hence $\norm{a} z = \norm{x} \norm{y}  \sin(\alpha + \beta) a$, which is the desired trigonometric identity. Finally, observe that the coefficients of $a,x$, and $y$ represent areas of lattice parallelograms. Hence such coefficients must be nonnegative integers.
\end{proof}
\medskip

We are now in a position to prove that the elasticity of each monoid in $\mathcal{C}_2$ is either rational or infinite.

\begin{theorem} \label{thm:elasticity in dimension two}
	Let $H$ be a monoid in $\mathcal{C}_2$. Then $\rho(H)$ is either rational or infinite.
\end{theorem}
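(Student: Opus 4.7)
By Lemma~\ref{lem:basic isomorphic representation} we may assume that $H$ is a rank-$2$ additive submonoid of $\nn^2$. Set $\ell := \inf\slp(\mathcal{A}(H))$ and $L := \sup\slp(\mathcal{A}(H))$, the slopes of the two extreme rays of $\cone(H)$. The plan is to split the argument according to whether $\slp(\mathcal{A}(H))$ has a limit point in the open interval $(\ell, L)$.

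First, I would show that if $\slp(\mathcal{A}(H))$ has an interior limit point $s^* \in (\ell, L)$, then $\rho(H) = \infty$. Pick fixed atoms $x_0, y_0 \in \mathcal{A}(H)$ with $\slp(x_0) < s^* < \slp(y_0)$ and extract a sequence of distinct atoms $a_n \in \mathcal{A}(H)$ with $\slp(a_n) \to s^*$. Because distinct lattice vectors whose directions converge to a fixed ray cannot lie in a bounded region, $\norm{a_n} \to \infty$. Applying Lemma~\ref{lem:trigonometric identity} to the triples $(x_0, a_n, y_0)$ yields positive integers $p_n, q_n, r_n$ with $p_n x_0 + q_n y_0 = r_n a_n$; the parallelogram-area expressions give $r_n = p_1(x_0) p_1(y_0) (\slp(y_0) - \slp(x_0))$, a fixed positive integer, while each of $p_n, q_n$ is a positive multiple of $p_1(a_n)$ whose limiting coefficient is strictly positive (since $s^* - \slp(x_0) > 0$ and $\slp(y_0) - s^* > 0$). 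Thus $(p_n + q_n)/r_n \to \infty$, and since $\mathsf{L}(r_n a_n)$ contains both $r_n$ (from the factorization $r_n \cdot a_n$) and $p_n + q_n$ (from the Lemma~\ref{lem:trigonometric identity} identity), $\rho(r_n a_n) \to \infty$ and so $\rho(H) = \infty$.

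It remains to treat the case where every limit point of $\slp(\mathcal{A}(H))$ lies in $\{\ell, L\}$, so for every $\epsilon > 0$ only finitely many atoms have slopes in $[\ell + \epsilon, L - \epsilon]$ and the remaining atoms cluster along one or both extreme rays. I would show $\rho(H) \in \qq_{\ge 1} \cup \{\infty\}$ by analyzing the two one-sided boundary limits separately. For a sequence $x_n \in H^\bullet$ with $\slp(x_n) \to \ell^+$, length-optimizing factorizations are asymptotically forced to use atoms whose slopes also approach $\ell$; projecting such atoms onto a rational linear functional transverse to the $\ell$-ray reduces the resulting factorization problem to one of \emph{generalized} length in a rank-$1$ free monoid, to which Lemma~\ref{lem:set of pseudo-elasticity} applies. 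That lemma asserts that the set of generalized elasticities has at most one limit point, equal to the ratio of extreme projected generators, which is rational. A symmetric analysis applies at the $L$-boundary. Combining these two boundary limits with the contribution from the finitely many interior atoms, the supremum $\rho(H) = \sup_x \rho(x)$ is either attained at some $x$ (rational), realised as one of the two one-sided rational boundary limits, or equal to $+\infty$.

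The principal obstacle is the boundary-cluster case. The example $H = \langle (n, n+1) : n \in \nn^\bullet \rangle$, in which atoms have unbounded norm with $\slp(a_n) \to 1$ yet $\rho(H) = 1$, shows that Lemma~\ref{lem:trigonometric identity} alone does not force $\rho(H) = \infty$ in this regime. The rationality of $\rho(H)$ must therefore be extracted via the one-dimensional asymptotic analysis provided by Lemma~\ref{lem:set of pseudo-elasticity}, and correctly matching this 1D reduction to the full 2D geometry of $\mathcal{A}(H)$ near the extreme rays is the technical core of the argument.
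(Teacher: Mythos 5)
Your overall architecture is close to the paper's: the interior-limit-point case handled via Lemma~\ref{lem:trigonometric identity} is essentially the paper's CASE~1.1/2.1 (your version, fixing $x_0,y_0$ and letting $\norm{a_n}\to\infty$ so that the coefficient of $a_n$ stays constant while the other two blow up, is correct and arguably cleaner), and the reduction of the boundary-cluster case to the generalized elasticity of a rank-one monoid via a transverse projection is exactly the paper's CASE~2.2.2. However, the boundary-cluster case as you have sketched it has two genuine gaps.

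First, your appeal to Lemma~\ref{lem:set of pseudo-elasticity} silently assumes that the transverse projections of the atoms take only \emph{finitely} many values. Lemma~\ref{lem:set of pseudo-elasticity} is a statement about $\langle n_1,\dots,n_k\rangle$ with $k$ finite, and there is nothing preventing the set $S_\ell=\{\norm{\pp_{v^\perp}(a)} \mid a\in\mathcal{A}(H)\}$ from being infinite even when $\slp(\mathcal{A}(H))$ has a single limit point at an extreme ray: take, e.g., the atoms $(n^2,n)$, whose slopes $1/n$ decrease to $0$ while their distances to the limit ray are $1,2,3,\dots$. In that regime your 1D reduction produces a ``generalized factorization problem'' with infinitely many distinguished generators, the ratio of extreme projected generators is not defined (or is $\infty/1$), and Lemma~\ref{lem:set of pseudo-elasticity} gives nothing. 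The paper isolates this as a separate sub-case and proves $\rho(H)=\infty$ there by another application of the trigonometric identity (choosing $a$ with $\norm{\pp_{v^\perp}(a)}$ huge relative to a fixed $x$, and $y$ with slope just past $\slp(a)$). Without the dichotomy on whether $S_\ell$ is finite, your argument does not cover all monoids in the boundary-cluster case. (Even when $S_\ell$ is finite, note that the projection only yields $\rho(H)\le\rho_g(H_\ell)$ directly; realizing the ratio $s_k/s_1$ \emph{inside} $H$ to get the reverse inequality requires explicit constructions, which you defer.)

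Second, your case split is by whether a limit point of $\slp(\mathcal{A}(H))$ is interior to $(\ell,L)$, so the configuration in which atoms accumulate at \emph{both} extreme rays $\ell=\inf\slp(\mathcal{A}(H))$ and $L=\sup\slp(\mathcal{A}(H))$ lands in your second case. There, $\rho(H)=\infty$, but the witnesses are elements built by mixing atoms from the two clusters (e.g., for $H=\langle (1,n),(n,1) : n\ge 1\rangle$ the element $(n+1)(1,1)=(1,n)+(n,1)$ has $\rho\ge (n+1)/2$); their slopes do not approach either boundary, so they are invisible to your two separate one-sided analyses. Your closing trichotomy (``attained, a boundary limit, or $+\infty$'') is exactly what needs to be proved, and the $+\infty$ branch in the two-cluster configuration requires the mixed trigonometric-identity argument, not a combination of two rational one-sided limits.
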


\begin{proof}
	If $H$ is finitely generated, then it follows by~\cite[Theorem~7]{AACS93} that $\rho(H)$ is rational. So we assume that $H$ is not finitely generated. Note that for every $v \in \nn^2$, the submonoid $\nn v \cap H$ of $H$ is isomorphic to an additive submonoid of $\nn$ and is, therefore, finitely generated. This, along with the fact that $|\mathcal{A}(H)| = \infty$, implies that the set $\slp(\mathcal{A}(H))$ must have at least one limit point (maybe $\infty$). By reflecting $H$ with respect to the line $y = x$ if necessary, we can assume that $\slp(\mathcal{A}(H))$ has a finite limit point.
	
	CASE 1. The set $\slp(\mathcal{A}(H))$ has at least two limit points. In this case, we will argue that $H$ has infinite elasticity. To do so, take $N \in \nn$.
	
	CASE 1.1. There exists $a \in \mathcal{A}(H)$ such that the vector $\slp(a)$ is strictly between two limit points of $\slp(H)$. Then the sets
	\[
		X := \{x \in \mathcal{A}(H) \mid \slp(x) < \slp(a)\}
	\]
	and
	\[
		Y := \{y \in \mathcal{A}(H) \mid \slp(y) > \slp(a)\}
	\]
	are both infinite. As a consequence, there exist atoms $x \in X$ and $y \in Y$ satisfying that $\min\{\norm{x}, \norm{y} \} \ge 2N \norm{a}$. By Lemma~\ref{lem:trigonometric identity}, one has that
	\begin{equation} \label{eq:coefficient of x,y,a}
		\big(\norm{a} \norm{y} \sin \beta \big) x + \big(\norm{a} \norm{x} \sin \alpha \big) y = \norm{x}  \norm{y} \sin (\alpha + \beta) a,	
	\end{equation}
	where $\alpha$ is the acute angle between $x$ and $a$, and $\beta$ is the acute angle between $a$ and~$y$. Using the fact that $\min\{\norm{x}, \norm{y} \} \ge 2N \norm{a}$, we obtain
	\begin{equation} \label{eq:coefficients for elasticity}
		\norm{x}  \norm{y} \sin(\alpha + \beta) > N( \norm{a}  \norm{y} \sin \beta + \norm{a}  \norm{x} \sin \alpha). 
	\end{equation}
	Because the coefficients of $x$, $y$, and $a$ in the identity~(\ref{eq:coefficient of x,y,a}) are positive integers, the element $h_0 := \norm{x} \norm{y} \sin (\alpha + \beta) a$ belongs to $H$. Moreover, applying the inequality~(\ref{eq:coefficients for elasticity}), one obtains that
	\[
		\rho(H) \ge \rho(h_0) \ge \frac{ \norm{x}  \norm{y} \sin (\alpha + \beta)  } { \norm{a}  \norm{y} \sin \beta + \norm{a}  \norm{x} \sin \alpha }\ge N.
	\]
	Hence $\rho(H) = \infty$.
	
	CASE 1.2. There is no $a \in \mathcal{A}(H)$ such that $\slp(a)$ is strictly between two limit points. Observe that, in this case, $\slp(H)$ contains exactly two limit points. As a consequence, it is not hard to see that we can choose $x,y,a \in \mathcal{A}(H)$ such that $\slp(x) < \slp(a) < \slp(y)$. In addition, note that we can assume that $\norm{a}$ is large enough that the inequalities
	\[
		\norm{a} \sin \beta > N \norm{x}/2 \quad \text{and} \quad \norm{a} \sin \alpha > N \norm{y}/2
	\]
	hold, where the angles $\alpha$ and $\beta$ are defined as in the CASE~1.1. By Lemma~\ref{lem:trigonometric identity}, we again obtain the identity~(\ref{eq:coefficient of x,y,a}). Because the coefficients of $x$, $y$, and $a$ in~(\ref{eq:coefficient of x,y,a}) are positive integers, $h_1 = 2 \norm{x}  \norm{y} \sin (\alpha + \beta) a$ belongs to $H$. Using the inequalities $\norm{a} \sin \beta > N \norm{x}/2$ and $\norm{a} \sin \alpha > N \norm{y}/2$, we get
	\[
		\rho(H) \ge \rho(h_1) \ge \frac { \norm{a} \norm{y} \sin \beta + \norm{a} \norm{x} \sin \alpha } { \norm{x} \norm{y} \sin (\alpha + \beta)  } > N.
	\]
	Thus, in this case, $\rho(H) = \infty$.
	
	CASE 2. The set $\slp(\mathcal{A}(H))$ contains only one limit point. Let $\ell$ be the limit point of $\slp(\mathcal{A}(H))$. Now consider the set
	\[
		X_\ell := \{x \in \mathcal{A}(H) \mid \slp(x) < \ell\}
	\]
	and the set
	\[
		  \ Y_\ell := \{y \in \mathcal{A}(H) \mid \slp(y) > \ell\}.
	\]
	
	CASE 2.1. The sets $X_\ell$ and $Y_\ell$ are both nonempty. Fix $N \in \nn$. Take $x \in X_\ell$ and $y \in Y_\ell$.  Since $\ell$ is a limit point of $\slp(H)$, we can choose $a \in \mathcal{A}(H)$ satisfying that $\slp(x) < \slp(a) < \slp(y)$ and large enough such that the inequalities
	\[
		\norm{a} \sin \beta > N \norm{x}/2 \quad \text{and} \quad \norm{a} \sin \alpha > N \norm{y}/2
	\]
	hold. Proceeding exactly as we did in CASE 1.2, we can conclude that $\rho(H) > N$. Hence $\rho(H) = \infty$ in this case again.
	
	CASE 2.2. One of the sets $X_\ell$ and $Y_\ell$ is empty. Assume, without loss of generality, that $X_\ell$ is not empty. Take a nonzero vector $v \in \rr^2_{\ge 0}$ such that $\slp(v) = \ell$, and set
	\[
		v^\perp = \frac{(-p_2(v), p_1(v))}{\norm{(-p_2(v), p_1(v))}}.
	\]
	Now define the set
	\[
		S_\ell = \{\norm{\pp_{v^\perp}(a)} \mid a \in \mathcal{A}(H)\}.
	\]
	
	CASE 2.2.1. The set $S_\ell$ is not finite. Fix $x \in X_\ell$. As $S_\ell$ is infinite and the Fourier coefficient of each vector in $X_\ell$ with respect to the normal vector $v^\perp$ is an integer, there exists $a \in X_\ell$ such that $\norm{\pp_{v^\perp}(a)} > 2N \norm{x}$. Note that $\norm{\pp_{v^\perp}(a)} = \norm{a} \sin \beta'$, where $\beta'$ is the acute angle between $a$ and $v$. Now take $y \in X_\ell$ such that the acute angle $\beta$ between $y$ and $a$ is close enough to $\beta'$ that both inequalities $\slp(y) > \slp(a)$ and $\norm{a} \sin \beta > N \norm{x}$ hold. Now, we can apply Lemma~\ref{lem:trigonometric identity} to obtain once again the identity~(\ref{eq:coefficient of x,y,a}). Since the coefficients of $x$, $y$, and $a$ in~(\ref{eq:coefficient of x,y,a}) are positive integers, the element $h_2 := 2 \norm{x}  \norm{y} \sin (\alpha + \beta) a$ belongs to $H$. On the other hand, using the fact that $\norm{a} \sin \beta> N \norm{x}$, one finds that
	 \[
	 	\rho(H) \ge \rho(h_2) \ge \frac { \norm{a} \norm{y} \sin \beta + \norm{a} \norm{x} \sin \alpha } { \norm{x} \norm{y} \sin (\alpha + \beta)  } > \frac { \norm{a} \sin \beta } { \norm{x} \sin (\alpha + \beta)  } > N.
	 \]
	 This allows us to conclude again that $\rho(H) = \infty$.
	 
	 CASE 2.2.2. The set $S_\ell$ is finite. Take $H_\ell = \langle s_1, \dots, s_k \rangle$, where $S_\ell = \{s_1, \dots, s_k\}$ and $s_1 < \dots < s_k$. Among all the atoms of $H$ minimizing the set $S_\ell$, let $x$ be the one of largest slope. On the other hand, among all the atoms of $H$ maximizing $S_\ell$, let $a$ be the one with largest slope. Fix $\epsilon > 0$.
	 
	 First, suppose that $\slp(a) < \slp(x)$, and let $\alpha$ be the acute angle between $x$ and~$a$. Now take $y \in \mathcal{A}(H)$, and let $\beta$ denote the acute angle between $a$ and $y$. Since
	 \[
	 	\lim_{\norm{y} \to \infty} \frac{\norm{a} \sin \beta}{ \norm{x} \sin(\alpha + \beta)} = \frac{ \norm{\pp_{v^\perp}(a)} }{ \norm{\pp_{v^\perp}(x)}} = \frac{s_k}{s_1},
	 \]
	 we can assume that $\norm{y}$ is large enough such that the inequalities $\slp(a) > \slp(y)$ and
	 \begin{equation} \label{eq:epsilon inequality 1}
	 	\frac{ \norm{a} \sin \beta}{ \norm{x} \sin(\alpha + \beta)} > \frac{s_k}{s_1} - \epsilon
	 \end{equation}
	 both hold. Now, Lemma~\ref{lem:trigonometric identity} allows us to use identity~(\ref{eq:coefficient of x,y,a}) once again. This, along with the fact that $h_3 := \norm{x} \norm{y} \sin(\alpha + \beta)a \in H$, implies that
	 \[
	 	\rho(H) \ge \rho(h_3) \ge \frac{\norm{a} \sin \beta + \big(\norm{x} / \norm{y} \big) \norm{a} \sin \alpha}{ \norm{x} \sin (\alpha + \beta)} > \frac{s_k}{s_1} - \epsilon .
	 \]
	 Thus, $\rho(H) \ge s_k/s_1 \ge \rho_g(H_\ell)$.
	 
	 Now suppose that $\slp(a) > \slp(x)$. Let $\alpha$ be defined as before, take $y \in \mathcal{A}(H)$, and let $\beta$ now denote the acute angle between $x$ and $y$. Since
	 \[
		 \lim_{ \norm{y} \to \infty} \frac{ \norm{a} \sin(\alpha + \beta)}{\big(\norm{x} / \norm{y}\big) \norm{a} \sin \alpha + \norm{x} \sin \beta} = \frac{ \norm{\pp_{v^\perp}(a)}}{ \norm{\pp_{v^\perp}(x)}} = \frac{s_k}{s_1}
	 \]
	 and $\ell$ is a limit point of $\slp(H)$, we can assume that $\norm{y}$ is large enough so that $\slp(x) > \slp(y)$ and
	 \[
	 	\frac{\norm{a} \sin(\alpha + \beta)}{\big(\norm{x} / \norm{y}\big) \norm{a} \sin \alpha + \norm{x} \sin \beta} > \frac{s_k}{s_1} - \epsilon.
	 \]
	 By Lemma~\ref{lem:trigonometric identity},
	 \begin{equation} \label{eq:coefficient of a,x,y,}
	 	\big(\norm{x} \norm{y} \sin \beta \big) a + \big(\norm{x} \norm{a} \sin \alpha \big) y = \norm{a} \norm{y} \sin (\alpha + \beta) x.
	 \end{equation}
	 Since the coefficients in~(\ref{eq:coefficient of a,x,y,}) are positive integers, $h_4 := \norm{a} \norm{y} \sin(\alpha + \beta)x$ is an element of $H$ and, therefore,
	 \[
	 	\rho(H) \ge \rho(h_4) \ge \frac{\norm{a} \sin (\alpha + \beta)}{\big(\norm{x}/ \norm{y} \big) \norm{a} \sin \alpha + \norm{x} \sin \beta } \ge \frac{s_k}{s_1} - \epsilon.
	 \]
	 As a consequence, $\rho(H) \ge s_k/s_1 \ge \rho_g(H_\ell)$.
	 
	 Finally, suppose that $\slp(a) = \slp(x)$. In this case, it is not hard to see that the element $h_5 := p_1(a) p_2(a) x \in H$ can also be written in the form $h_5 = p_1(a)p_2(x) a$. Since $a,x \in \mathcal{A}(H)$, it follows that
	 \[
	 	\rho(H) = \rho(h_5) \ge \frac{p_2(a)}{p_2(x)} = \frac{\norm{a}}{\norm{x}} = \frac{s_k}{s_1} \ge \rho_g(H_\ell).
	 \]
	 Thus, we always have $\rho(H) \ge \rho_g(H_\ell)$. On the other hand, if $a_1 + \dots + a_n \in \mathsf{Z}_H(w)$ is an $n$-length factorization of $w \in H$, then $\norm{\pp_{v^\perp}(a_1)} + \dots + \norm{\pp_{v^\perp}(a_n)}$ is an $n$-length generalized factorization of $\norm{\pp_{v^\perp}(w)}$ in $H_\ell$. Therefore $\mathsf{L}_H(w) \subseteq \mathsf{L}_{g(H_\ell)}(\norm{\pp_{v^\perp}(w)})$ for all $w \in H$, where $\mathsf{L}_{g(H_\ell)}(h)$ denotes the generalized set of lengths of $h$ in $H_\ell$ with respect to the distinguished set of generators $s_1, \dots, s_k$. This implies that $\rho(H) \le \rho_g(H_\ell)$. Hence $\rho(H) = \rho_g(H_\ell)$, which is rational by Lemma~\ref{lem:set of pseudo-elasticity}. This completes the proof.
\end{proof}
\medskip

We conclude our exposition providing a subclass of $\mathcal{C}_d$ (for $d \ge 3$) whose members have rational or infinite elasticity.

\begin{theorem} \label{thm:elasticity of polyhedral GAMS}
	Let $H$ be a monoid in $\mathcal{C}_d$ with $d \ge 3$. If the cone of $H$ in the $\qq$-space $\qq \otimes_{\zz} \emph{gp}(H)$ is polyhedral, then $\rho(H)$ is either rational or infinite.
\end{theorem}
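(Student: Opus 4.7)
The plan is to combine a geometric reduction using the faces of $\cone(H)$ with the classical fact that finitely generated monoids have rational elasticity. First I would apply Lemma~\ref{lem:basic isomorphic representation} to identify $H$ with a rank-$d$ submonoid of $\nn^d$. If $H$ is finitely generated, then $\rho(H) \in \qq$ by~\cite[Theorem~7]{AACS93}, so I may assume $|\mathcal{A}(H)| = \infty$ while $\cone(H)$ remains a rational pointed polyhedral cone with only finitely many faces.

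The first main step is a reduction to a face of $\cone(H)$ whose relative interior carries infinitely many atoms. For each extreme ray $R$ of $\cone(H)$, Gordan's Lemma shows that $H \cap R$ is finitely generated; and because $R$ is a face, $H \cap R$ is divisor-closed in $H$, so its (finitely many) atoms are atoms of $H$. Hence only finitely many atoms of $H$ lie on extreme rays. Since $\cone(H)^\bullet$ is the disjoint union of the relative interiors of its faces, pigeonhole supplies a face $F$ of $\cone(H)$ with $\dim F \ge 2$ whose relative interior contains infinitely many atoms of $H$. I would choose such an $F$ of minimal dimension and pass to $H' := H \cap F$, which is divisor-closed in $H$ (so $\rho(H') \le \rho(H)$) and whose cone is the polyhedral cone $F$. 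By the minimality of $\dim F$, only finitely many atoms of $H'$ lie outside $\relin(F)$, so it suffices to show $\rho(H') = \infty$.

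The remainder is a short elasticity computation. Let $u_1, \dots, u_m \in \mathcal{A}(H')$ be the smallest nonzero elements of $H'$ on each extreme ray of $F$, so that $F = \cone(u_1,\dots,u_m)$. Every atom $a \in \mathcal{A}(H') \cap \relin(F)$ then admits a representation $a = \sum_{i=1}^m c_i(a) u_i$ with $c_i(a) \in \qq_{>0}$. A pigeonhole-type argument shows that $\sum_i c_i(a)$ is unbounded as $a$ varies: otherwise the infinitely many such atoms would be confined to the bounded polytope $\conv(0, M u_1, \dots, M u_m)$ for some $M$, whose intersection with $\zz^d$ is finite. Clearing denominators to produce $N = N(a) \in \zz_{>0}$ with $N c_i(a) \in \zz$ for every $i$ then exhibits two factorizations of $Na$ in $H'$, one as $N$ copies of $a$ (length $N$) and one as $\sum_{i=1}^m (N c_i(a))\, u_i$ (length $N \sum_i c_i(a)$). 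Therefore $\rho_{H'}(Na) \ge \sum_i c_i(a)$, which is unbounded, forcing $\rho(H') = \infty$ and hence $\rho(H) = \infty$.

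I expect the main obstacle to lie in the geometric reduction: I need to check carefully that the minimal face $F$ and the resulting submonoid $H' = H \cap F$ inherit all the structure required by the final argument---namely, that $\cone(H') = F$ is still polyhedral, that the extreme rays of $F$ really are extreme rays of $\cone(H)$ (so that the atoms $u_i$ are available on them), and that the divisor-closedness of $H'$ in $H$ transfers $\rho(H') = \infty$ back to $\rho(H) = \infty$. By comparison, the concluding bounded-polytope counting argument and the two-factorization length estimate are short and essentially routine.
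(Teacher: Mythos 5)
Your proposal is correct, and its overall strategy coincides with the paper's: reduce to the case where $H$ is an infinitely generated submonoid of $\nn^d$ whose cone is polyhedral, pick atoms on the extreme rays of the cone, and show $\rho(H)=\infty$ by comparing the trivial length-$N$ factorization of a multiple $Na$ of a suitable atom $a$ with a decomposition of $Na$ into extreme-ray atoms. Where you differ is in the quantitative step that makes the second factorization long. The paper works directly with the whole cone: it fixes atoms $a_1,\dots,a_n$ on the one-dimensional faces, decomposes any lattice point of the cone as $v+\sum_i c_i a_i$ with $v$ in the fundamental parallelepiped $\Pi$ and $c_i\in\zz_{\ge 0}$, clears the single remainder $v$ with one uniform multiplier $N_0$ (chosen so that $N_0 z\in\zz_{\ge 0}a_1+\dots+\zz_{\ge 0}a_n$ for all $z\in\Pi\cap\zz^d$), and shows by a norm inequality that an atom of large norm forces $\sum_i c_i$ to be large. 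You instead first pass to a minimal-dimensional face $F$ whose relative interior contains infinitely many atoms, so that those atoms admit strictly positive rational representations in the extreme-ray atoms of $F$, and you obtain unboundedness of the coefficient sums from a bounded-polytope counting argument. Both routes work; the parallelepiped decomposition buys a uniform treatment of all atoms (including those on the boundary of the cone), which makes your face reduction unnecessary, while your compactness argument avoids the explicit norm estimates. Two small points to tidy: the finite generation of $H\cap R$ for an extreme ray $R$ follows from the fact that every submonoid of $\nn$ is isomorphic to a numerical monoid (Gordan's Lemma gives finite generation of $R\cap\zz^d$, not of the arbitrary submonoid $H\cap R$); and in the pigeonhole step you should count $\cone(H)$ itself among the ``faces'' (with relative interior $\relin(\cone(H))$), since under the paper's supporting-hyperplane definition the full cone is not literally a face, yet the $F$ you end up with may well be all of $\cone(H)$.
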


\begin{proof}
	Set $d = \rank(H)$. After identifying $\qq \otimes_{\zz} \text{gp}(H)$ with $\qq^d$, we can assume that $H$ is a submonoid of $\nn^d$. If $H$ is finitely generated, then we can argue that $\rho(H) \in \qq$ as we did at the beginning of the proof of Theorem~\ref{thm:elasticity in dimension two}. Then there is no loss in assuming that $H$ is not finitely generated, i.e., $|\mathcal{A}(H)| = \infty$. \\
	
	Fix $N \in \nn$. Since $\cone(H)$ is polyhedral, it must have finitely many one-dimensional faces; call them $L_1, \dots, L_n$. Since each $L_i$ is a one-dimensional face, we can take $a_i \in L_i \cap \mathcal{A}(H)$ for each $i \in \ldb 1,n \rdb$. Clearly, $\cone(H) = \cone(a_1, \dots, a_n)$. Consider the parallelepiped
	\[
		\Pi := \big\{\alpha_1 a_1 + \dots + \alpha_n a_n \mid 0 \le \alpha_i \le 1 \ \text{for every} \ i \in \ldb 1, n \rdb \big\}.
	\]
	Since $\Pi \cap \zz^d$ is finite and
	\[
		\Pi \cap \zz^d \subset \qq_{\ge 0} a_1 + \dots + \qq_{\ge 0} a_n,
	\]
	we can choose $N_0 \in \nn^\bullet$ large enough that $N_0 z \in \zz_{\ge 0} a_1 + \dots + \zz_{\ge 0} a_n$ for each $z \in \Pi \cap \zz^d$. Notice that for each $n \in \nn$, there exist only finitely many atoms of $H$ whose norms are at most $n$. 
	As a result, there exists $a \in \mathcal{A}(H)$ such that
	\[
		\norm{a} > (N + 1)(\norm{a_1} + \dots + \norm{a_n}).
	\]
	
	As we can naturally partition $\cone(H)$ into copies of the parallelepiped $\Pi$, we can write $a = v + c_1 a_1 + \dots + c_n a_n$ for some $v \in \Pi \cap \zz^d$ and nonnegative integer coefficients~$c_1, \dots, c_n$. Since the diameter of $\Pi$ is $\norm{a_1 + \dots + a_n}$ and $v \in \Pi$, it follows that $\norm{v} \le \norm{a_1} + \dots + \norm{a_n}$ and, therefore,
	\[
		1 + \sum_{i=1}^n c_i \ge \frac{\norm{v}}{\sum_{i=1}^n \norm{a_i}} + \frac{\norm{\sum_{i=1}^n c_ia_i}}{\sum_{i=1}^n \norm{a_i}} \ge \frac{\norm{a}}{\sum_{i=1}^n \norm{a_i}} > 1 + N.
	\]
	Hence $c_1 + \dots + c_n > N$. Taking $c'_1, \dots, c'_n \in \zz_{\ge 0}$ such that $N_0 v = c'_1 a_1 + \dots + c'_n a_n$, we obtain that
	\[
		N_0 a = N_0 v + \sum_{i=1}^n N_0c_i a_i = \sum_{i=1}^n (c'_i + N_0 c_i) a_i.
	\]
	Therefore
	\[
		\rho(H) \ge \rho(N_0 a) \ge \frac{\sum_{i=1}^n (c'_i + N_0 c_i)}{N_0} \ge \sum_{i=1}^n c_i > N.
	\]
	As $N$ was arbitrarily taken, $\rho(H) = \infty$, which concludes the proof.
\end{proof}
\medskip
	
Recall that an atomic monoid $H$ is fully elastic if $\mathcal{R}(H) = \{q \in \qq \mid 1 \le q \le \rho(H)\}$. Each monoid in $\mathcal{C}_1$ fails to be fully elastic (see~\cite[Theorem~2.2]{CHM06}). However, because every atomic monoid having full system of sets of lengths is, obviously, fully elastic, we have the following direct implication of Corollary~\ref{cor:GAMS with full system of sets of lengths}.
	
\begin{prop}
	For each $d \ge 2$, there exists a monoid in $\mathcal{C}_d$ that is fully elastic.
\end{prop}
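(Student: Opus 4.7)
The plan is to observe that the statement is an immediate consequence of Corollary~\ref{cor:GAMS with full system of sets of lengths}, since any BF-monoid with full system of sets of lengths is automatically fully elastic with infinite elasticity. So for each $d \ge 2$ I would take $H \in \mathcal{C}_d$ furnished by that corollary, satisfying $\mathcal{L}(H) = \pf$, and then verify the two inclusions defining full elasticity.

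For the first (trivial) direction, since every monoid in $\mathcal{C}$ is an FF-monoid (hence a BF-monoid), for each $x \in H^\bullet$ the set $\mathsf{L}(x)$ is finite with positive integer minimum and maximum, so $\rho(x) \in \qq_{\ge 1}$; this gives $\mathcal{R}(H) \subseteq \qq_{\ge 1}$. For the reverse direction, fix any $q \in \qq_{\ge 1}$ and write $q = b/a$ with positive integers $a \le b$; by replacing $(a,b)$ with $(2a, 2b)$ if necessary, one may assume $a, b \in \zz_{\ge 2}$. Then $\{a, b\} \in \pf = \mathcal{L}(H)$, so there is some $x \in H$ with $\mathsf{L}(x) = \{a,b\}$, and hence $\rho(x) = b/a = q$. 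This shows $\mathcal{R}(H) = \qq_{\ge 1}$.

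Finally, to identify the set $\{q \in \qq \mid 1 \le q \le \rho(H)\}$ with $\qq_{\ge 1}$, I would compute $\rho(H)$: for every $N \in \zz_{\ge 1}$ the set $\{2, 2N\}$ lies in $\pf$, hence in $\mathcal{L}(H)$, yielding some $x \in H$ with $\rho(x) = N$, so $\rho(H) = \infty$. Therefore $\mathcal{R}(H) = \qq_{\ge 1} = \{q \in \qq \mid 1 \le q \le \rho(H)\}$, which is precisely the definition of fully elastic. There is no real obstacle here; the argument simply unpacks the implication already noted in the sentence immediately preceding the proposition.
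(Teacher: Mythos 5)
Your argument is correct and takes exactly the same route as the paper, which states the proposition as a direct consequence of Corollary~\ref{cor:GAMS with full system of sets of lengths} together with the observation that any atomic monoid with full system of sets of lengths is fully elastic. You merely spell out the details that the paper leaves as ``obvious,'' and those details (realizing $q=b/a$ via $\mathsf{L}(x)=\{a,b\}$ with $a,b\in\zz_{\ge 2}$, and showing $\rho(H)=\infty$) are all sound.
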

\bigskip

\section*{Acknowledgements}

	While working on this paper, the author was supported by the NSF-AGEP Fellowship and by the UC Year Dissertation Fellowship. The author would like to thank Alfred Geroldinger for helpful suggestions on early versions of this paper. In addition, the author would like to thank Winfried Bruns for providing the main idea of the proof of Lemma~\ref{lem:basic isomorphic representation} and Salvatore Tringali for proposing the question motivating Section~\ref{sec:BF monoid with full elasticity}. Finally, the author is grateful to an anonymous referee, whose careful revision and comments help improve the final version of this paper.
	
	\bigskip
	


\begin{thebibliography}{20}
	
	\bibitem{ACHP07} J. Amos, S. T. Chapman, N. Hine, and J. Paixao: \emph{Sets of lengths do not characterize numerical monoids}, Integers {\bf 7} (2007) A50.
	
	\bibitem{AACS93} D. D. Anderson, D. F. Anderson, S. T. Chapman, and W. W. Smith: \emph{Rational elasticity of factorizations in Krull domains}, Proc. Amer. Math. Soc. {\bf 117} (1993) 37--43.
	
	\bibitem{dA97} D.~F.~Anderson: \emph{Elasticity of factorizations in integral domains: a survey}, Factorization in Integral Domains, Lect. Notes Pure Appl. Math. Vol. 189, Marcel Dekker (1997) 1--29.
	
	\bibitem{ASS94} D.~F.~Anderson, S.~T.~Chapman, and W.~W.~Smith: \emph{Some factorization properties of Krull domains with infinite cyclic divisor class group}, J. Pure Appl. Algebra {\bf 96} (1994) 97--112.
	
	\bibitem{BCS08} P. Baginski, S. T. Chapman, and G. J. Schaeffer: \emph{On the delta-set of a singular arithmetical congruence monoid}, J. Th\'eor. Nombres Bordeaux {\bf 20} (2008) 45--59.
	
	\bibitem{BOP17} T.~Barron, C.~O'Neil, and R.~Pelayo: \emph{On the set of elasticities in numerical monoids}, Semigroup Forum {\bf 94} (2017) 94--37.

	\bibitem{BG09} W.~Bruns and J.~Gubeladze: \emph{Polytopes, rings and K-theory}, Springer Monographs in Mathematics, Springer, Dordrecht, 2009.
	
	\bibitem{lC60} L.~Carlitz: \emph{A characterization of algebraic number fields with class number two}, Proc. Amer. Math. Soc. {\bf 11} (1960) 391--392.

	\bibitem{CDHK10} S. T. Chapman, J. Daigle, R. Hoyer, and N. Kaplan: \emph{Delta sets of numerical monoids using nonminimal sets of generators}, Comm. Algebra {\bf 38} (2010) 2622--2634.

	\bibitem{CGGR01} S.~T.~Chapman, J.~I.~Garc\'ia-Garc\'ia, P.~A.~Garc\'ia-S\'anchez, and J.~C.~Rosales: \emph{Computing the elasticity of a Krull monoid}, Linear Algebra Appl. {\bf 336} (2001) 191--200.

	\bibitem{CGTV16} S.~T.~Chapman, P.~A.~Garc\'ia-S\'anchez, Z.~Tripp, and C.~Viola: \emph{Measuring primality in numerical semigroups with embedding dimension three}, J. Algebra Appl. {\bf 15} (2016) 1650007.
	
	\bibitem{CGP14} S.~T.~Chapman, F.~Gotti, and R.~Pelayo: \emph{On delta sets and their realizable subsets in Krull monoids with cyclic class groups}, Colloq. Math. {\bf 137} (2014) 137--146.
	
	\bibitem{CHM06} S.~T.~Chapman, M.~T.~Holden, and T.~A.~Moore: \emph{Full elasticity in atomic monoids and integral domains}, Rocky Mountain J. Math. {\bf 36} (2006) 1437--1455.
	
	\bibitem{DGMT18} M.~D'Anna, P.~A.~Garc\'ia-S\'anchez, V.~Micale, and L.~Tozzo: \emph{Good subsemigroups of $\nn^n$},  Int. J. Algebra Comput. {\bf 28} (2018) 179--206.

	\bibitem{FG08} M.~Freeze and A.~Geroldinger: \emph{Unions of sets of lengths}, Funct. Approx. Comment. Math. {\bf 39} (2008) 149--162.
	
	\bibitem{sF13} S.~Frisch: \emph{A construction of integer-valued polynomials with prescribed sets of lengths of factorizations}, Monatsh. Math. {\bf 171} (2013) 341--350.
	
	\bibitem{FNR17} S.~Frisch, S.~Nakato, and R.~Rissner: \emph{Integer-valued polynomials on ring of algebraic integers of number fields with prescribed sets of lengths of factorizations}. [arXiv:1710.06783]
	
	\bibitem{GR09} P.~A.~Garc\'ia-S\'anchez and J.~C.~Rosales: \emph{Numerical Semigroups}, Developments in Mathematics Vol. 20, Springer-Verlag, New York, 2009.

	\bibitem{aG16} A.~Geroldinger: \emph{Sets of lengths}, Amer. Math. Monthly {\bf 123} (2016) 960--988.
	
	\bibitem{aG88} A. Geroldinger: \emph{\"Uber nicht-eindeutige Zerlegungen in irreduzible Elemente}, Math. Z. \textbf{197} (1988) 505--529. 
	
	\bibitem{GH06b} A.~Geroldinger and F.~Halter-Koch: \emph{Non-unique Factorizations: Algebraic, Combinatorial and Analytic Theory}, Pure and Applied Mathematics Vol. 278, Chapman \& Hall/CRC, Boca Raton, 2006.
	
	\bibitem{GHL95} A.~Geroldinger, F.~Halter-Koch, and G.~Lettl: \emph{The complete integral closure of monoids and domains II}, Rend. Mat. Appl. VII {\bf 15} (1995) 281--292.
	
	\bibitem{GHL07} A.~Geroldinger, W.~Hassler, and G.~Lettl: \emph{On the arithmetic of strongly primary monoids}, Semigroup Forum {\bf 75} (2007) 567--587.

	\bibitem{GS17} A.~Geroldinger and W.~Schmid: \emph{A realization theorem for sets of lengths in numerical monoids}, Forum Math. {\bf 30} (2018) 1111--1118.

	\bibitem{GZ18} A. Geroldinger and Q. Zhong: \emph{Sets of arithmetical invariants in transfer Krull monoids}, J. Pure Appl. Algebra \textbf{223} (2019) 3889--3918.
	
	\bibitem{fG17} F. Gotti: \emph{On the atomic structure of Puiseux monoids}, J. Algebra Appl. {\bf 16} (2017) 1750126.
	
	\bibitem{fG17c} F.~Gotti: \emph{System of sets of lengths of Puiseux monoids}, J. Pure Appl. Algebra {\bf 223} (2019) 1856--1868.
	
	\bibitem{GG17} F.~Gotti and M.~Gotti: \emph{Atomicity and boundedness of monotone Puiseux monoids}, Semigroup Forum {\bf 96} (2018) 536--552.
	
	\bibitem{GO17} F.~Gotti and C.~O'Neil: \emph{The elasticity of Puiseux monoids}. To appear in J. Commut. Algebra, doi: https://projecteuclid.org/euclid.jca/1523433696. Available on Arxiv: [arXiv:1703.04207]
	
	\bibitem{pG01} P.~A.~Grillet: \emph{Commutative Semigroups}, Advances in Mathematics Vol. 2, Kluwer Academic Publishers, Boston, 2001.
	
	\bibitem{fK99} F.~Kainrath: \emph{Factorization in Krull monoids with infinite class group}, Colloq. Math. {\bf 80} (1999) 23--30.

	\bibitem{sT17} S.~Tringali: Is the elasticity of a submonoid of the free abelian monoid over a finite set either rational or infinite?, URL (version: 2017-11-16): https://mathoverflow.net/questions/285971
	
	\bibitem{rV90} R.~J.~Valenza: \emph{Elasticity of factorization in number fields}, J. Number Theory {\bf 36} (1990) 212--218.
	
\end{thebibliography}
\end{document}